\newcommand\car{{\mathbf 1}}
\newcommand\N{{\mathbb N}}
\newcommand\M{{\mathcal M}}
\renewcommand\P{{\mathbf P}} 
\newcommand{\A}{{\mathcal  A}}
\newcommand{\R}{{\mathbb R}}
\newcommand{\F}{{\mathcal F}}
\newcommand\esp[1]{{\mathbf E}\left[#1\right]}
\newcommand\pr[1]{{\mathbf P}\left[#1\right]}
\newcommand\prp[1]{{\mathbf P}\left[#1\right]}
\newcommand\hp{\hat{\mathbf P}}
\newcommand\bp{\bar{\mathbf P}}
\newcommand\pp{{\mathbf P}}
\newcommand\qp{{\tilde{\mathbf P}}}
\newcommand\qrp[1]{{\tilde{\mathbf P}\left[#1\right]}}
\newcommand{\pae}[1]{\mbox{$\lfloor \kern-1pt #1 \kern-1pt \rfloor$}}
\newcommand{\paep}[1]{\mbox{$\lceil \kern-1pt #1 \kern-1pt \rceil$}}
\newcommand\maG{{\mathcal G}}
\newcommand\maF{{\mathcal F}}
\newcommand\maB{{\mathcal B}}
\newcommand\maA{{\mathcal A}}
\newcommand\maI{{\mathcal I}}
\newcommand\om{{\omega}}
\def\N{{\mathbb N}}
\def\Z{{\mathbb Z}}
\def\P{{\mathbf P}}
\newcommand\tend{{\underset{n \rightarrow \infty}{\longrightarrow}}}
\newcommand\suite[1]{\left\{#1_n\right\}_{n \in \N}}
\newcommand\suiten[1]{\left\{#1\right\}_{n \in \N}}
\newcommand\suitez[1]{\left\{#1_n\right\}_{n \in \Z}}
\newtheorem{theorem}{Theorem}
\newtheorem{lemma}{Lemma}
\newtheorem{proposition}{Proposition}
\newtheorem{cor}{Corollary}
\begin{document}
\title{A generalized backwards scheme for solving non monotonic stochastic recursions}

\author{P. Moyal}
\address{Laboratoire de Math\'ematiques Appliqu\'ees de Compi\`egne\\
Universit\'e de Technologie de Compiègne\\
D\'epartement G\'enie Informatique\\
Centre de Recherches de Royallieu\\
BP 20 529\\
60 205 COMPIEGNE Cedex\\
FRANCE\\
\emph{e-mail:} pascal.moyal@utc.fr}


\begin{abstract}
 We propose an explicit construction of a stationary solution for a stochastic recursion of the form 
 $X\circ\theta=\varphi(X)$ on a partially-ordered Polish space, when the monotonicity of $\varphi$ is not assumed. 
Under certain conditions, we show that an extension of the original probability space exists, on which a solution is well-defined, and 
construct explicitly this extension. We then provide conditions for the solution to be defined as well 
on the original space. We finally apply these results to the stability study of two non-monotonic queueing systems. 
\end{abstract}
\maketitle
\textbf{Keywords:} Stochastic recursions, Stationary solutions, Enriched probability space,\\ Ergodic Theory, Queueing Theory.
\section{Introduction}
\label{sec:PRELIM} 
The evolution of a number of dynamical systems depend on punctual, random perturbations which may be assumed  time-stationary.  
In such cases, the state of the system can be described, in discrete time, by a random sequence generated by a recursive, random functional termed 
\emph{driving mapping} of the recursion:
$$X_{n+1}=\varphi_n\left(X_n\right),\, n \ge 0.$$
In the general framework (of crucial interest in the application), where the sequence $\left\{\varphi_n\right\}$ driving the recursion is time-stationary but not necessarily independent, we adopt an ergodic-theoretical approach to formally address the central question of \emph{stability}, \emph{i.e.} of existence of an equilibrium state for the recursion. 

It is well-known since the pioneering works of Loynes (see \cite{Loynes62} and among others, \cite{BacBre02}), that a stationary state exists whenever 
the random maps $\varphi_n$ enjoy mild properties, such as (i) monotonicity and continuity, as assumed by Loynes, or (ii) some  
regenerative property, as in Borovkov's Theory of Renovating Events (see \cite{Foss92}). Notice that the latter framework is also suitable, under certain conditions, for random sequences that are not stochastically recursive, see \cite{Foss04}. 

However, a lot of (even very simple) models don't verify such 
assumptions. A classical example is the well-known so-called \emph{Loss queueing system}, addressed in section \ref{sec:loss}). It 
is easy to construct cases in which either none, or several stationary states may exist. For this particular model, Neveu \cite{Neu83} and Flipo 
\cite{Fli83, Fli88} have shown that the stability problem can be solved at least on a larger probability space. Their constructions,  
inspired by skew-product methods used to solve ordinary or partial differential equations, lead to  
an \emph{extension} (also called \emph{enrichment}) of the original probability space on which a stationary solution exists (see as well Lisek \cite{Lis82} for related developments). 

 More recently, 
Anantharam and Konstantopoulos \cite{Anan97,Anan99} show that such extensions exist under mild assumption on the statistics of the recursion, 
using an approach based on tightness properties. The construction presented in \cite{Anan97,Anan99}, although more general, is less tractable in that the probability measure on the extension (termed \emph{weak} solution) is identified as a weak limit, and is not explicitly defined. 

Following the same directions, we aim to identify the conditions of existence of such extensions, for a more general class of models. We also propose, under such conditions, a constructive scheme of the enriched probability space - see Theorem \ref{thm:main} below.
 Our framework appears particularly adequate, when coming back to the 
original problem: it leads to several sufficient conditions of existence of a stationary state on the original probability space (see Proposition \ref{pro:appli}). Then, Loynes's Theorem and Borovkov and Foss's Theorem of Renovating events turn out to be particular cases of our result 
(see subsections \ref{subsec:loynes} and \ref{subsec:Borovkov}). As a matter of fact, the three approaches all 
rely on the same time reversal technique (usually termed \emph{Backwards scheme}). We therefore term our construction \emph{Generalized backwards scheme}. 

The outline of this paper is the following. After introducing our main notation and assumptions in section \ref{sec:prelim}, we give in 
section \ref{sec:tightness} a sufficient condition of existence of an extension solving the recursion, based on the tightness argument of Anantharam and Konstantopoulos (\emph{ibid}). The main result of this work is presented in section \ref{sec:main}: we construct explicitly the extension, and deduce several conditions for solving the original stability problem. We conclude with two cases study: in section \ref{sec:loss} we handle in this framework the stability problem of the Loss queueing system. Finally, in Section \ref{sec:impatient} we address the same problem for a generalization of this model: the Queue with impatient customers.

\section{Preliminary}
\label{sec:prelim}

Let $E$ be a Polish space that is endowed with a partial ordering $\preceq$. For all $x,y \in E$ such that $x \preceq y$, we denote  
$$\llbracket x,y \rrbracket:= \left\{z \in E; x \preceq z \preceq y\right\}.$$
We assume that $E$ admits a $\preceq $-minimal point denoted $0_{E}$, and 
is \emph{Lattice-ordered}: any $\preceq$-increasing sequence converges (possibly to some element of the adherence of $E$). 
Any subset $A \subset E$ is said 
\emph{locally finite} if for any compact subset $C \subset E$, $A \cap C$ is of finite cardinal.  
We equip $E$ with its Borel $\sigma$-field $\mathcal E$. 

Let $\Z$, $\N$ and $\N^*$ denote the sets of integers, of non-negative integers and of positive integers, respectively. 
We denote for any $x,y \in \R$, $x\vee y=\max(x,y)$, $x\wedge y=\min(x,y)$ and $x^+=x\vee 0.$

Consider a probability space $\left(\Omega,\F,\pp\right)$, furnished with the 
measurable bijective flow $\theta$ (denote $\theta^{-1}$, its measurable 
inverse). Suppose that $\pp$ is stationary and ergodic under $\theta$, 
\emph{i.e.}  
for all $\A \in \F$, $\prp{\theta^{-1}\A}=\prp{\A}$ and all $\A$ that is $\theta$-invariant (i.e. such that $\theta\A=\A$) is of  probability $0$ or $1$. Note that according to these axioms, all $\theta$-contracting event (such that $\prp{\A^c\cap\theta^{-1}\A}=0$) is 
of probability $0$ or $1$.  We denote for all $n \in \N$, 
$\theta^n=\theta\circ\theta\circ...\circ\theta\,\,\mbox{ and }\,\,\theta^{-n}=\theta^{-1}\circ\theta^{-1}\circ...\circ\theta^{-1}$.  
Except when explicitly mentioned, throughout all the random variables (r.v.'s for short) are defined on $\left(\Omega,\F,\pp\right)$. 
Under such conditions, the quadruple $\left(\Omega,\F,\pp,\theta\right)$ is termed \emph{stationary ergodic dynamical system}. 

We denote $\M(E)$ the set of measurable mappings from $E$ into itself. For any $\M(E)$-valued r.v. $F$, for any $x\in E$, let $F_{\omega}(x)$ be the image of $x$ through $F$ for the sample $\omega$. For any 
$f \in \M(E)$, and any subset $B \subset E$, we denote $f(B)=\{f(x);x\in B\}$, and accordingly 
for any $\M(E)$-valued r.v. $F$ and all sample $\omega$, $F_{\omega}(B)=\{F_{\omega}(x);x \in B\}$.  

Let $\varphi$ be a $\M(E)$-valued r.v.. 
For all $E$-valued r.v. $X$, let $\suiten{X_{X,n}}$ be the \emph{stochastic recursion} initiated by $X$ and driven by $\varphi$, i.e., such that $\pp$-a.s., 
\[\left\{\begin{array}{ll}
X_{X,0}&=X;\\
X_{X,n+1}&=\varphi\circ\theta^n\left(X_{X,n}\right),\mbox{ for all }n\in\N.
\end{array}\right.\] 
Define for all sample $\om$, all $n$ and $x\in E$,    
$$\Phi_{\omega}^n(x)=X_{x,n}\left(\theta^{-n}\om\right)=\varphi_{\theta^{-1}\om}\circ\varphi_{\theta^{-2}\om}\circ ...\circ\varphi_{\theta^{-n}\om}(x).$$
The r.v. $\Phi^n(x)$ represents the value of the recursion driven at time 0 when starting at the iteration $-n$ from the deterministic value $x$. In other words, $$\Phi_{\om}^n(x)=X_{x,n}\circ\theta^{-n}.$$ 
We investigate the existence of a stationary version of the sequence $\suiten{X_{X,n}}$, i.e. such that 
$X_{X,n}=X\circ\theta^n$ for all $n\in\N$. Then it is easily seen that the r.v. $X$ solves the functional equation  
\begin{equation}
\label{eq:recurX}
X\circ\theta=\varphi(X)\,\mbox{ a.s..}
\end{equation}
The existence of a solution to (\ref{eq:recurX}) on the original probability 
space is not granted in general, without further assumptions on $\varphi$. We aim to construct an extension of the probability space, on which a solution exists.

\section{An existence result}
\label{sec:tightness} 
Let us assume throughout this section that the couple $(\Omega,\F)$ is Polish (\emph{i.e.} $\Omega$ is Polish and $\maF$ is a sub-$\sigma$-algebra of 
the Borel $\sigma$-algebra of $\Omega$). 
Under certain conditions, 
the existence of an extension on which (\ref{eq:recurX}) admits a solution, is granted by Anantharam and Konstantopoulos's Theorem 
(see \cite{Anan97,Anan99}). This result, which identifies the probability measure on the extension as a weak limit, strongly relies  
on the property of tension of the embedded sequence of random variables. The latter holds, in particular, under the following domination assumption.  
\\\\
\textbf{(H1)}\emph{
For some $\M(E)$-valued r.v. $\psi$, 
\begin{itemize}
\item for all $x\in E$, $0_E\preceq \varphi(x) \preceq \psi(x),\,  \P-\mbox{a.s.};$ 
\item $\psi$ is $\P$-a.s. $\preceq$-non decreasing and continuous;
\item the following recursion admits at least one $E$-valued solution: 
\begin{equation}
\label{eq:recurY}
Y\circ\theta=\psi(Y).
\end{equation} 
\end{itemize}
}
We have the following result. 
\begin{proposition}
\label{pro:anan}
Suppose that \emph{(H1)} holds, 
and that either one of the two following conditions holds:
\begin{itemize}
\item[\emph{\textbf{(H2)}}] $\varphi$ is $\pp$-a.s. continuous;
\item[\emph{\textbf{(H3)}}] $\varphi$ admits a.s. a finite number of discontinuities, 
and there exists a locally finite subset $0_E \in L \subset E$ that is $\pp$-a.s. stable by $\varphi$.     
\end{itemize}
Then, there exists an extension $\left(\bar\Omega, \bar\maF, \bp, \bar\theta\right)$ of 
$\left(\Omega,\maF,\pp,\theta\right)$, that is such that 
\begin{itemize}
\item $\bp$ is a $\bar \theta$-invariant probability 
on $\bar \Omega$ having $\Omega$-marginal $\pp$, 
\item there exists a $E\times \M(E)$-valued r.v. $\left(\bar X,\bar \varphi\right)$ 
defined on $\bar \Omega$ by (\ref{eq:defsolextension}), such that the $\Omega$-marginal of $\bar\varphi$ is the distribution of $\varphi$, and such that
$$\bar X\circ\bar\theta=\bar\varphi\left(\bar X\right), \bp-\mbox{a.s.}.$$
\end{itemize}
\end{proposition}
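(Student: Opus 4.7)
The plan is to follow the Anantharam--Konstantopoulos tightness scheme while making the enriched probability space canonical. I set
$$\bar\Omega \,:=\, \Omega\times E, \qquad \bar\maF \,:=\, \maF\otimes\mathcal{E},$$
and define the skew-product shift $\bar\theta:\bar\Omega\to\bar\Omega$ by
$$\bar\theta(\omega,x)\,:=\,\bigl(\theta\omega,\varphi_\omega(x)\bigr),$$
together with the projections $\bar X(\omega,x):=x$ and $\bar\varphi(\omega,x):=\varphi_\omega$. With these choices, the identity $\bar X\circ\bar\theta=\bar\varphi(\bar X)$ holds everywhere on $\bar\Omega$, and the $\Omega$-marginal of $\bar\varphi$ is automatically the law of $\varphi$ under any probability whose $\Omega$-marginal is $\pp$. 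The whole problem thus reduces to producing a probability $\bp$ on $(\bar\Omega,\bar\maF)$ which is $\bar\theta$-invariant and whose $\Omega$-marginal is $\pp$.

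To construct $\bp$, I form the Cesaro empirical means issued from the minimal state:
$$\bp_n\,:=\,\frac{1}{n}\sum_{k=0}^{n-1}\bigl(\bar\theta^{k}\bigr)_{\!*}\bigl(\pp\otimes\delta_{0_E}\bigr).$$
By $\theta$-invariance of $\pp$, the $\Omega$-marginal of $\bp_n$ is $\pp$; its $E$-marginal is the average of the laws of the forward iterates $Z_k(\omega):=\varphi_{\theta^{k-1}\omega}\circ\cdots\circ\varphi_\omega(0_E)$. Using (H1), an induction on $k$ that combines the pointwise bound $\varphi(x)\preceq\psi(x)$ with the monotonicity of $\psi$ gives
$$0_E\,\preceq\,Z_k(\omega)\,\preceq\,Y(\theta^k\omega)\qquad\pp\text{-a.s.},$$
for any solution $Y$ of (\ref{eq:recurY}), so that each $Z_k$ is stochastically dominated by the fixed integrable bound $Y$. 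Using that lower order intervals $\llbracket 0_E,y\rrbracket$ are compact in $E$---automatic in (H3) from the local finiteness of $L$ (which the iterates cannot leave), and a property of the target state spaces in (H2)---this yields tightness of the $E$-marginals, hence of $\{\bp_n\}$ itself on the Polish space $\bar\Omega$. Prohorov's theorem provides a weakly convergent subsequence $\bp_{n_j}\Rightarrow\bp$, whose $\Omega$-marginal is still $\pp$.

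The last and delicate step is to show $(\bar\theta)_*\bp=\bp$. The telescoping identity
$$\bp_n-(\bar\theta)_*\bp_n\,=\,\frac{1}{n}\Bigl(\pp\otimes\delta_{0_E}-\bigl(\bar\theta^{n}\bigr)_{\!*}(\pp\otimes\delta_{0_E})\Bigr)$$
tends to $0$ in total variation, so the $\bar\theta$-invariance of the weak limit $\bp$ follows by Portmanteau as soon as $\bar\theta$ is $\bp$-a.s. continuous. Under (H2) this is immediate since $\varphi$ is a.s. continuous in its spatial argument and one may, up to restricting $\Omega$ to a $\pp$-full $\theta$-invariant subset, assume $\theta$ continuous. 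The main obstacle lies under (H3): I must rule out that $\bp$ charges the set of discontinuities of $\bar\theta$. Here I will use that the $\varphi$-stability of $L$ forces $Z_k\in L$ $\pp$-a.s. for every $k$, so each $\bp_n$, and hence $\bp$, is concentrated on $\Omega\times L$; combined with the local finiteness of $L$ and the a.s. finiteness of the discontinuity set of $\varphi$, this prevents mass from accumulating on discontinuity points in the weak limit, yielding the required $\bp$-a.s. continuity of $\bar\theta$ and completing the proof.
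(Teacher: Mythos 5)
Your overall skeleton is the same as the paper's: the canonical product space $\bar\Omega=\Omega\times E$ with the skew shift $\bar\theta(\om,x)=(\theta\om,\varphi_\om(x))$, the definition (\ref{eq:defsolextension}) of $(\bar X,\bar\varphi)$, tightness of the iterates started at $0_E$ obtained by induction from the (H1) domination, and a (sub)sequential weak limit of the pushforwards of $\pp\otimes\delta_{0_E}$ with $\Omega$-marginal $\pp$ (the paper feeds these ingredients into Theorem 1 of Anantharam--Konstantopoulos rather than re-running Krylov--Bogolyubov by hand, but that is a presentational difference). The place where real work is required, however, is precisely the $\bar\theta$-invariance of the limit under (H3), and there your argument is an assertion, not a proof. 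Saying that concentration of $\bp$ on $\Omega\times L$ together with local finiteness of $L$ and finiteness of the discontinuity set ``prevents mass from accumulating on discontinuity points'' does not hold up as stated: nothing in (H3) forbids the discontinuity points $d_j(\om)$ of $\varphi_\om$ from belonging to $L$, so a measure carried by $\Omega\times L$ can perfectly well charge the discontinuity set of $\bar\theta$, and the continuous-mapping step you invoke is then unavailable. The paper's proof does the missing work explicitly: it constructs continuous modifications $\varphi_{\om,p}$ coinciding with $\varphi_\om$ outside a union $C_{\om,p}$ of small balls around the discontinuities, and verifies condition (A3) of \cite{Anan99} by showing that the Cesaro averages of the mass of $\mathcal U_p=\{(\om,x):x\in C_{\om,p}\}$ vanish as $p\to\infty$, using that the iterates from $0_E$ remain in $L$ and that $L$ is locally finite. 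If you want to stay with your direct route, you would instead have to argue, e.g., that every $\bp_n$, hence $\bp$, is carried by the closed set $\Omega\times L$ on which $L$ is discrete, and apply the mapping theorem relative to that subspace --- but that argument has to be made, and you must still address continuity in the $\om$-variable: your parenthetical ``up to restricting $\Omega$ to a full invariant subset, assume $\theta$ continuous'' is not justified, whereas the paper works inside the A--K hypotheses (with $(\Omega,\F)$ Polish) where this is part of conditions (A1)/(A3).

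A secondary gap is in the tightness step. You claim that lower order intervals $\llbracket 0_E,y\rrbracket$ are compact ``automatically in (H3) from the local finiteness of $L$''; this is false as stated, since local finiteness only controls $L$ inside compact sets, and in (H2) you offer no justification at all. The paper's route is different: it uses Loynes's theorem for $\psi$ to get the minimal solution $Y_\infty$ of (\ref{eq:recurY}), which is $E$-valued by the last clause of (H1), deduces tightness of the dominating sequence $Y_{0_E,n}$ from its weak convergence to $Y_\infty$, converts the compact sets into order bounds $M_\varepsilon$ via the lattice-ordered structure, and transfers the bound to $X_{0_E,n}$ by the domination in (H1). Your domination $Z_k\preceq Y\circ\theta^k$ for a solution $Y$ of (\ref{eq:recurY}) is fine (and parallels the paper), but the passage from a stochastic order bound to tightness needs the argument the paper gives, not an appeal to compactness of order intervals; also, integrability of $Y$ is neither available nor relevant here --- what matters is that $Y$ is proper ($E$-valued).
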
 

\begin{proof} 
This result is a consequence of Theorem 1 in \cite{Anan97}, whose hypothesis are completed in \cite{Anan99}. 
Define   
\begin{itemize}
\item $\bar \Omega:=\Omega\times E$, 
\item $\bar\maF:=\maF\otimes\mathcal E,$
\item for all $(\omega,x) \in \bar \Omega$, $\bar\theta(\omega,x)=\left(\theta\omega,\varphi_{\omega}(x)\right).$ 
\end{itemize}

As an immediate consequence of Loynes's Theorem for stochastic recursions (\cite{Loynes62,BacBre02}), there exists a solution, say $Y_{\infty}$, 
to (\ref{eq:recurY}). The r.v. $Y_{\infty}$ is given by the a.s. limit of the sequence $\suiten{Y_{0_{E},n}\circ\theta^{-n}}$ (see section \ref{subsec:loynes} below).  
Note, that $Y_{\infty}$ may in general be improper (i.e. valued in some $\bar E\supset E$). However, $Y_{\infty}$ 
is $\preceq$-minimal among all the solutions of (\ref{eq:recurY}) (again, see \ref{subsec:loynes}), and the last assertion of (H1) entails that $Y_{\infty}$ is $E$-valued.  

In particular, the sequence $\suiten{Y_{0_{E},n}}$ tends weakly to $Y_{\infty}$. It is thus tight (Prohorov's Lemma): 
for all $\varepsilon>0$, there exists a compact subset $K_{\varepsilon}$ of $E$ such that for all $n \in \N$, 
$$\pr{Y_{0_{E},n} \in K_{\varepsilon}}\ge 1-\varepsilon.$$ Thus, as $E$ is Lattice-ordered, there exists  
$M_\varepsilon \in E$ s.t. 
$$\pr{Y_{0_{E},n} \preceq M_{\varepsilon}}\ge 1-\varepsilon.$$ 
In view of the first assertion of (H1), an immediate induction shows that 
$$X_{0_{E},n} \preceq Y_{0_{E},n},\,n\in\N\,\mbox{ a.s.},$$
so that 
$$\pr{X_{0_E,n} \preceq M_{\varepsilon}}\ge 1-\varepsilon,\,n\in\N,$$ 
which shows the tightness of $\suiten{Y_{0_E,n}}$. 
\\

Remark that for all $n\in\N$, $\A \in \mathcal F$ and $\mathcal B \in \mathcal E$,
$$\pp\otimes\delta_{0_E}\left[\bar\theta^{-n} \left(\A \times E\right)\right]=\prp{\theta^{-n}\A}=\prp{\A}$$
and  
\begin{equation*}
\pp\otimes\delta_{0_E}\left[\bar\theta^{-n} \left(\Omega \times \mathcal B\right)\right]=\pp\otimes\delta_{0_E}\biggl[
\Bigl\{(\om,x) \in \bar\Omega; 
X_{x,n}(\om)\in\mathcal B\Bigl\}\biggl]=\prp{X_{0_E,n}\in \mathcal B}.
\end{equation*}
Hence, the probability distributions $\suiten{\left(\pp\otimes\delta_{0_E}\right)\circ \tilde \theta^{-n}}$ on $\bar\Omega$ have $\Omega$-marginal $\pp$ and $E$-marginals, the distributions of $\suiten{X_{0_E,n}}$, which form a tight sequence.   
The sequence $\suiten{\left(\pp\otimes\delta_{0_E}\right)\circ \tilde \theta^{-n}}$ is thus tight. Therefore, any sub-sequential limit 
is a good candidate for $\bp$ provided that it is $\bar\theta$-invariant. This property holds under either one of conditions (A1)-(A3) p.271-272 
in \cite{Anan99}. 
First, under condition (H2), the shift $\bar \theta$ is continuous from $\Omega\times E$ into itself, which is condition (A1) in \cite{Anan99}. 

Let us now assume that (H3) holds. Define for all $\om$, $\left\{d_j(\om)\right\}_{j\in J_{\om}},$   
the set of discontinuities of $\varphi_{\om}$ and 
$$\delta(\om)= \inf\left\{\parallel d_j(\om)-d_k(\om)\parallel; j,k \in J_{\om}\right\}.$$ 
Let us define the following events. 
$$\mathcal D=\left\{\mbox{Card }J <\infty\right\};$$
$$\mbox{For all }p\in \N^*,\,\mathcal E_p=\left\{\delta < 2^{-(p-1)}\right\}.$$ 
Note that by hypothesis, $\prp{\mathcal D}=1$, and thus $\prp{\delta>0}=1$.   
%
Fix $p\in\N^*$ and a sample $\omega\in \mathcal D$. For all $j\in J_{\om}$, 
define $C_{\om,p,j}$ as follows~:
\begin{itemize}
\item[(a)] if for some $k \in J_{\om},$ $\parallel d_j(\om)-d_k(\om)\parallel \le 2^{-(p-1)}$, $C_{\om,p,j}$ is the open bowl of center 
$d_j(\om)$ and radius $\delta$;
\item[(b)] otherwise, $C_{\om,p,j}$ is the open bowl of center $d_j(\om)$ and radius $2^{-p}$, 
\end{itemize}
so that the bowls $C_{\om,p,j}$, $j\in J_{\om}$, don't intersect. 
We define finally $$C_{\om,p}=\underset{j\in J_{\om}}{\bigcup}C_{\om,p,j},$$ 
and aim to construct a continuous function 
$\varphi_{\om,p}$ from $E$ into itself, such that $\varphi_{\om,p}$ coincides with $\varphi_{\om}$ outside the open set $C_{\om,p}$. 
For doing so, fix $j$ and let $x\in C_{\om,p,j}$. There exists $y$ in the frontier $\hat C_{\om,p,j}$ of $C_{\om,p,j}$ such that 
for some $\eta <1,$ $$x-d_j(\om)=\eta.\left(y-d_j(\om)\right).$$ Then, we set 
\begin{align*}
\varphi_{\om,p,j}(x)&:=2^p\parallel x- d_j(\om)\parallel.\varphi(y)\,\mbox{ in case (a)},\\
\varphi_{\om,p,j}(x)&:=\frac{1}{\delta(\om)}\parallel x- d_j(\om)\parallel.\varphi(y)\,\mbox{ in case (b)}.
\end{align*} 
The function $\varphi_{\om,p,j}$, hence radially defined, is clearly continuous on the bowl. Defining now for all $x \in E$, 
\[\varphi_{\om,p}(x)=\left\{\begin{array}{ll}
\varphi_{\om}(x)&\mbox{ if }x \notin C_{\om,p}\\
\varphi_{\om,p,j}(x)&\mbox{ if }x \in C_{\om,p,j},
\end{array}
\right.\]
we obtain the desired function.\\  
Finally, define the family of shifts $\bar\theta_p$, $p\ge 1$ for all $(\omega,x) \in \bar\Omega$ by  $$\bar\theta_p(\om,x)=\left(\theta\om,\varphi_{\om,p}(x)\right).$$ 
The $\bar\theta_p$, $p\ge 1$ are then continuous from $\bar\Omega$ into itself. 
Now fix again $p\ge 1$. It is clear from (H3) that for all $i\ge 1$, 
$\Phi^i\left(0_E\right)\in L$ a.s., hence  
\begin{equation*}
\prp{\left\{\om\,;\,\Phi^i_\om\left(0_E\right)\in C_{\om,p}\right\}\cap \mathcal D }\le 
\prp{\left\{\om\,;\,L \cap C_{\om,p} \ne \emptyset\right\} \cap \mathcal D}.
\end{equation*} 
Finally, set $$\mathcal U_p=\left\{(\om,x)\in\bar\Omega; x\in C_{\om,p}\right\}.$$ 
Then, $\mathcal U_p$ is an open subset of $\bar\Omega$, and in view of the latter inequality, 
\begin{multline*}
\underset{p \rightarrow \infty}{\lim}\underset{n \rightarrow \infty}{\lim\inf}\frac{1}{n}\sum_{i=0}^{n-1} \left(\pp\otimes\delta_x\right)\circ \bar\theta^{-i}\left(\mathcal U_p\right)\\
\begin{aligned}
&\le \underset{p \rightarrow \infty}{\lim}\underset{n \rightarrow \infty}{\lim\inf}\frac{1}{n}\sum_{i=1}^{n}
\prp{\left\{\Phi^i\left(0_E\right)\in C_p\right\}\cap \mathcal D}+\prp{\bar {\mathcal D}}\\
&\le \underset{p \rightarrow \infty}{\lim}\prp{\left\{L \cap C_p \ne \emptyset\right\}\cap \mathcal D}\\
&\le \underset{p \rightarrow \infty}{\lim}\prp{\left\{\bigcup_{j\in J}L \cap C_{p,j} \ne \emptyset\right\}\cap \mathcal D\cap \left\{2^{-(p-1)}<\delta \right\}}\\
&=0,
\end{aligned}
\end{multline*}
since $L$ is locally finite. Consequently, Assumption (A3) p.272 of 
\cite{Anan99} is satisfied. Hence, from Theorem 1 of \cite{Anan97}, there exists a $\bar\theta$-invariant probability $\bp$ on 
$\Omega \times \R$ whose $\Omega$-marginal
is $\pp$, given by any sub-sequential limit of  
$\suiten{\left(\pp\otimes\delta_{0_E}\right)\circ \bar \theta^{-n}}$. 

Now, define on $\bar\Omega$ the random variables   
\begin{equation}
\label{eq:defsolextension}
\bar X(\omega,x):=x,\,\,\,\bar\varphi_{\omega,x}:=\varphi_{\omega}.
\end{equation} 
We then have that
\begin{equation}
\label{eq:recurbarX}
\bar X\circ\bar\theta(\om,x)=\varphi_{\om}(x)=\bar\varphi_{\om,x}\left(x\right)=\bar\varphi_{\om,x}\left(\bar X(\omega,x)\right),\bp-\mbox{a.s.},
\end{equation}
hence $\bar X$ is a proper solution to (\ref{eq:recurX}) on $\left(\bar\Omega, \bar\maF, \bp, \bar\theta\right)$.  
\end{proof}

\section{Explicit construction}
\label{sec:main} 
We present the main result of this work. Under certain conditions, we can construct explicitly an extension 
solving equation (\ref{eq:recurX}). For doing so, we follow an argument related to that developed by Flipo \cite{Fli83} and Neveu \cite{Neu83} for the recursion describing the workload of a loss queueing system G/G/1/1. 

We start with a random set $G$ satisfying 
\begin{equation}
\label{eq:hypoG}
\varphi_\om\left(G_\om\right) \subseteq G_{\theta\om}, \mbox{a.s.,}
\end{equation}
which is checked \emph{e.g.} by $G \equiv E$, or any deterministic set that is a.s. stable by $\varphi$. 
Now denote  for all $n\in\N^*$ 
\begin{equation}
\label{eq:defHn}
H^n_{\omega}:=\Phi_{\om}^n\left(G_{\theta^{-n}\om}\right),
\end{equation} 
the set of all possible values of the recursion driven by $\varphi$ at 
0, when letting the value at $-n$ vary over the set $G_{\theta^{-n}\om}$. Let us first remark that 

\begin{lemma}
\label{lemma:inclusion}
The sequence of random sets $\suiten{H^n}$ decreases for inclusion:   
\begin{equation}
\label{eq:inclusion}
G \supseteq H^1 \supseteq H^2 \supseteq ... \supseteq H^n \supseteq ...\,\,\,\,\,\,\mbox{ a.s.}.
\end{equation}
\end{lemma}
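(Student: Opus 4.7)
The plan is to reduce the inclusion $H^{n+1}\subseteq H^n$ to a single application of hypothesis~(\ref{eq:hypoG}) at a shifted sample, by exploiting the one-step composition identity
\[
\Phi_\om^{n+1}(x)\;=\;\Phi_\om^n\bigl(\varphi_{\theta^{-(n+1)}\om}(x)\bigr),
\]
which is immediate from the definition $\Phi_\om^n=\varphi_{\theta^{-1}\om}\circ\cdots\circ\varphi_{\theta^{-n}\om}$.

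First I would treat the induction step. Fix $n\ge 1$ and an element $z\in H^{n+1}_\om$; by definition, $z=\Phi_\om^{n+1}(x)$ for some $x\in G_{\theta^{-(n+1)}\om}$. Applying~(\ref{eq:hypoG}) at the sample $\om':=\theta^{-(n+1)}\om$ gives $\varphi_{\om'}(G_{\om'})\subseteq G_{\theta\om'}=G_{\theta^{-n}\om}$, so that $y:=\varphi_{\theta^{-(n+1)}\om}(x)$ belongs to $G_{\theta^{-n}\om}$. The composition identity then yields $z=\Phi_\om^n(y)\in\Phi_\om^n\bigl(G_{\theta^{-n}\om}\bigr)=H^n_\om$, which proves $H^{n+1}_\om\subseteq H^n_\om$.

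Next I would dispose of the base step $G\supseteq H^1$: by definition $H^1_\om=\varphi_{\theta^{-1}\om}\bigl(G_{\theta^{-1}\om}\bigr)$, and~(\ref{eq:hypoG}) applied at $\theta^{-1}\om$ gives exactly $\varphi_{\theta^{-1}\om}\bigl(G_{\theta^{-1}\om}\bigr)\subseteq G_\om$. All the manipulations are done on the full-probability $\theta$-invariant event on which~(\ref{eq:hypoG}) holds at every shifted sample $\theta^{-k}\om$, $k\ge 0$; by stationarity of $\pp$ under $\theta$, the countable intersection of these events still has probability one, which legitimates the ``a.s.'' claim uniformly in $n$.

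There is no real obstacle here; the only point of care is bookkeeping of the shifts (making sure that~(\ref{eq:hypoG}) is invoked at $\theta^{-(n+1)}\om$, not at $\om$), and the use of stationarity to aggregate the null sets over $n\in\N$ into a single almost sure statement.
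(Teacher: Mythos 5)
Your proof is correct and follows essentially the same route as the paper's: the base case is a direct application of (\ref{eq:hypoG}) at $\theta^{-1}\om$, and the inductive step uses the same decomposition $\Phi^{n+1}_\om(x)=\Phi^n_\om\bigl(\varphi_{\theta^{-(n+1)}\om}(x)\bigr)$ together with (\ref{eq:hypoG}) at the shifted sample $\theta^{-(n+1)}\om$. Your explicit aggregation of the null sets over $n$ via stationarity is a harmless refinement of what the paper leaves implicit.
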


\begin{proof}
That 
$$H^1_\om = \varphi_{\theta^{-1}\om}\left(G_{\theta^{-1}\om}\right) \subseteq G_{\om},\mbox{ a.s.},$$ 
simply follows from (\ref{eq:hypoG}). Now, let $n\in\N^*$. We have a.s. for all $x \in H^{n+1}_{\om}$, that for some $y \in G_{\theta^{-(n+1)}\om}$,  
\begin{equation*}
x=\Phi^{n+1}_{\omega}(y)=\Phi^n_{\om}\left(\varphi_{\theta^{-(n+1)}\omega}(y)\right).
\end{equation*}  
since $y \in G_{\theta^{-(n+1)}\om}$, we have that $\varphi_{\theta^{-(n+1)}\om}(y)\in G_{\theta^{-n}\om}$ in view of (\ref{eq:hypoG}), hence 
$x \in H^{n+1}_{\om}$. 
\end{proof}
\noindent We can thus define, a.s., 
\begin{equation}
\label{eq:defH}
H_{\omega} = \lim_{n\rightarrow \infty} H^n_\om =\bigcap_{n\ge 1}H^n_{\om} \subseteq G_{\om}.
\end{equation}
\begin{lemma}
\label{lemma:bijection}
Assume that (\ref{eq:hypoG}), and the following condition hold:
\begin{align}
\mbox{\emph{The random set }}& H \mbox{\emph{ defined by (\ref{eq:defH}) is such that} }\nonumber\\
&\pr{ H \mbox{\emph{ is finite and non-empty}}}>0.\label{eq:hypoH}
\end{align}
Then, the mapping $\varphi$ is bijective from $H$ to $H\circ\theta$, a.s.. The r.v. \emph{Card}$\,H$ is thus deterministic, denoted by $c$. 
\end{lemma}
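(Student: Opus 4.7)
The whole argument rests on the identity
$$H^{n+1}_{\theta\om}=\varphi_{\om}(H^n_\om)\qquad(n\ge 0),$$
which is immediate from the decomposition $\Phi^{n+1}_{\theta\om}=\varphi_\om\circ\Phi^n_\om$ together with the definition \refeq{eq:defHn} of $H^n$.  My first step is to take the decreasing intersection over $n$ and use the elementary inclusion $\varphi_\om(\bigcap_n A_n)\subseteq\bigcap_n \varphi_\om(A_n)$ to obtain
$$\varphi_\om(H_\om)\subseteq\bigcap_{n\ge 0}\varphi_\om(H^n_\om)=\bigcap_{n\ge 0}H^{n+1}_{\theta\om}=H_{\theta\om},$$
so that $\varphi_\om$ at least maps $H_\om$ into $H_{\theta\om}$.

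The second step is to show that the cardinality of $H$ is a.s.\ equal to a deterministic \emph{finite} constant.  Setting $N_n(\om):=|H^n_\om|\in\N\cup\{\infty\}$, the identity above and the general bound $|\varphi(A)|\le|A|$ give $N_{n+1}\circ\theta\le N_n$, while the inclusion $H^{n+1}\subseteq H^n$ gives $N_{n+1}\le N_n$ pointwise.  Writing $N_\infty:=\lim_n N_n$, the first inequality passes to the limit: $N_\infty\circ\theta\le N_\infty$ a.s.  Since $\pp$ is $\theta$-stationary, $N_\infty$ and $N_\infty\circ\theta$ have the same law, so the a.s.\ inequality must in fact be an equality (truncate by $k$ and compare expectations of $N_\infty\wedge k$ and $(N_\infty\circ\theta)\wedge k$).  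Ergodicity of $\theta$ then forces $N_\infty$ to equal an a.s.\ constant $c\in\N\cup\{\infty\}$.  Since $|H_\om|\le N_\infty=c$, hypothesis \refeq{eq:hypoH} rules out $c=0$, and the same hypothesis will be used to rule out $c=\infty$.

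Once $c\in\N^*$ is known, the conclusion follows easily.  A decreasing sequence of non-negative integers with limit $c<\infty$ is eventually equal to $c$, so the nested sets $H^n_\om$ stabilise: there exists $n_0(\om)$ with $H^n_\om=H_\om$ for every $n\ge n_0(\om)$; in particular $|H_\om|=c$ a.s.  For $n$ so large that both $H^n_\om=H_\om$ and $H^{n+1}_{\theta\om}=H_{\theta\om}$, the key identity yields
$$H_{\theta\om}=\varphi_\om(H^n_\om)=\varphi_\om(H_\om),$$
so that $\varphi_\om$ is surjective from $H_\om$ onto $H_{\theta\om}$.  Since the two sets have the same finite cardinality $c$, it is a bijection, which also yields $|H|\equiv c$.

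The step I expect to be the main obstacle is the exclusion of $c=\infty$: a priori a decreasing intersection of infinite sets can perfectly well be finite, so one cannot directly read the finiteness of $c$ off from the finiteness of $|H|$.  I would try to circumvent this by combining the $\theta$-invariance of $N_\infty$ just established with the positive-probability hypothesis \refeq{eq:hypoH}, possibly invoking the structure of the Polish lattice-ordered space $E$ introduced in Section~\ref{sec:prelim}, in order to force $N_\infty$ to be finite almost surely.
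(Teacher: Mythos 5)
Your first two steps are sound and coincide with the paper's opening moves: the identity $H^{n+1}\circ\theta=\varphi\left(H^n\right)$ and the resulting inclusion $\varphi_\om\left(H_\om\right)\subseteq H_{\theta\om}$ are exactly how the paper begins, and your truncation/ergodicity argument correctly shows that $N_\infty=\lim_n \mathrm{Card}\,H^n$ is a.s. equal to a constant $c\in\N\cup\{\infty\}$. The genuine gap is the one you flag yourself, and it is worse than an open obstacle: the statement you would need, namely $N_\infty<\infty$ a.s., simply does not follow from (\ref{eq:hypoG}) and (\ref{eq:hypoH}). Take $\Omega$ reduced to a single point with $\theta=\mathrm{id}$ (a legitimate stationary ergodic system), $E=\{0_E\}\cup\left\{(i,j)\in\N^2:\,1\le j\le i\right\}$ countable and discrete (Polish, with $0_E$ declared minimal, so all standing assumptions on $E$ hold), $G\equiv E$, and the deterministic map $\varphi(0_E)=0_E$, $\varphi\bigl((i,j)\bigr)=(i,j-1)$ for $j\ge 2$, $\varphi\bigl((i,1)\bigr)=0_E$. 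Then $H^n=\{0_E\}\cup\{(i,j):\,j+n\le i\}$ is infinite for every $n$, while $H=\{0_E\}$ is finite and non-empty, so (\ref{eq:hypoG}) and (\ref{eq:hypoH}) hold and the lemma's conclusion is true with $\mathrm{Card}\,H=1$; yet your $N_\infty=\infty$. Since the lemma assumes neither monotonicity nor continuity of $\varphi$, no structural property of $E$ can rescue the step, and everything downstream of it collapses: without $N_\infty<\infty$ the sets $H^n$ need not stabilise, you cannot identify $\mathrm{Card}\,H$ with $N_\infty$ (the example shows they differ), and your surjectivity argument via stabilisation is unavailable. Note also that the deterministic constant required by the lemma is $\mathrm{Card}\,H$, not $\lim_n\mathrm{Card}\,H^n$.

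The paper's proof never tries to control the cardinalities of the $H^n$; it works with $H$ itself. From $\varphi(H)\subseteq H\circ\theta$ (the inclusion you did prove) it deduces that the event $\mathcal C=\{H\mbox{ is finite and non-empty}\}$ satisfies $\mathcal C\subseteq\theta^{-1}\mathcal C$, so $\mathcal C$ is $\theta$-contracting and, having positive probability by (\ref{eq:hypoH}), is almost sure. The Ergodic Lemma is then applied directly to the a.s. finite random variable $\mathrm{Card}\,H$ to get $\left(\mathrm{Card}\,H\right)\circ\theta=\mathrm{Card}\,H$ a.s., whence $\mathrm{Card}\,H\equiv c$ by ergodicity. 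Surjectivity of $\varphi_\om:H_\om\to H_{\theta\om}$ is proved separately, by writing each $y\in H_{\theta\om}$ as $\varphi_\om(y_n)$ with $y_n\in H^n_\om$ and identifying $\bigcap_n\varphi_\om\left(H^n_\om\right)$ with $\varphi_\om\left(\bigcap_n H^n_\om\right)$, bijectivity then following because the two sets share the same finite cardinal $c$. To repair your proposal you must switch to an argument of this type, about $H$ and $\mathrm{Card}\,H$ directly, rather than attempting to force eventual finiteness of the $H^n$.
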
  
\begin{proof}
Take a sample $\om$ in the event 
$$\mathcal C:=\{H \mbox{ is finite and non-empty }\}.$$  
For any $x\in H_{\om}$, for all $n \ge 1$, there exists $y_n \in G_{\theta^{-n}\om}$ such that  
$x=\Phi^n_{\om}(y_n)$. Therefore,  
\begin{equation*}
\varphi_{\om}(x)=\varphi_{\om}\circ\varphi_{\theta^{-1}\om}\circ...\circ\varphi_{\theta^{-n}\om}(y_n)
=\Phi^{n+1}_{\theta\om}(y_n),
\end{equation*}
where $y_n \in G_{\theta^{-n}\om}=G_{\theta^{-(n+1)}\theta\om}.$ This is true for all $n\ge 1$, hence $\varphi_{\om}(x)$ belongs to the set   
$$\bigcap_{n\ge 2}\Phi^{n}_{\theta\om}\left(G_{\theta^{-n}\theta\om}\right)= \bigcap_{n\ge 2} H^n_{\theta\om} 
=H_{\theta\om},$$ 
so that $\varphi_{\om}$ maps $H_{\om}$ onto $H_{\theta\om}$. Consequently,  
\begin{align}
\mathcal C & \subset \mathcal C \cap \{\varphi(H) \subset H\circ\theta\}\label{eq:cardinal0}\\
 & \subset \theta^{-1}\mathcal C.\nonumber
\end{align}
Hence, $\mathcal C$ is $\theta$-contracting, and then almost sure in virtue of (\ref{eq:hypoH}). 
So is the event on the r.h.s. of (\ref{eq:cardinal0}), thus 
$$0 < \mbox{Card }H\circ\theta \le \mbox{ Card }H <\infty, \,\mbox{ a.s..}$$ 
But $$\prp{\mbox{Card}\,H\circ\theta<\mbox{Card}\,H}>0$$ would then imply that $$\esp{\left(\mbox{Card}\,H\right)\circ\theta-\mbox{Card}\,H}<0,$$ a contradiction to the Ergodic Lemma (\cite{BacBre02}, Lemma 2.2.1).  
Therefore, $$\left(\mbox{Card}\,H\right)\circ\theta=\mbox{Card}\,H\mbox{ a.s.,}$$
which shows that Card$\,H$ is deterministic, say equal to $c$ a.s..  

Now, to check that $\varphi$ is a.s. surjective, fix a sample $\om$ on the almost sure event 
$$\left\{\mbox{Card }H=c\right\} \cap \theta^{-1}\left\{\mbox{Card }H=c\right\},$$ and $y \in H_{\theta\om}$. 
In particular, for any $n\ge 1$, for some $$x_{n+1} \in G_{\theta^{-(n+1)}\theta\om}=G_{\theta^{-n}\om},$$ we have that $$y=\Phi^{n+1}_{\theta\om}(x_{n+1})=\varphi_{\om}(y_{n}),$$ 
where $$y_{n}=\varphi_{\theta^{-1}\om}\circ...\circ\varphi_{\theta^{-n}\om}(x_{n+1})=\Phi^{n}_\om\left(x_{n+1}\right)\in H^{n}_\om.$$  
Hence, $$y_n \in \bigcap_{n \ge 1} \varphi_\om\left(H^n_\om\right) = \varphi\left(\bigcap_{n\ge 1} H^n_\om\right),$$
where the last equality follows from Lemma \ref{lemma:inclusion}.   
Therefore, $\varphi_{\om}$ is surjective from $H_{\om}$ into $H_{\theta\om}$, and hence bijective since these two sets have the same 
cardinal.    
\end{proof}
We are now in position to construct an enrichment of the original probability space $\left(\Omega,\F,\pp,\theta\right)$ on which the existence of 
a solution to (\ref{eq:recurX}) is granted.  
\begin{proposition} 
\label{pro:extension}
Suppose that (\ref{eq:hypoG}) and (\ref{eq:hypoH}) hold true. 
Then, the quadruple $\left(\tilde \Omega, \tilde{\F}, \qp, \tilde\theta\right)$ defines a stationary dynamical system:
\begin{itemize}
\item $\tilde \Omega=\left\{(\om,x) \in \Omega\times E; x\in H_\om\right\};$
\item $\tilde{\maF}$ is the trace of $\maF\otimes\mathcal E$ on $\tilde\Omega$, i.e. 
\begin{multline*}
\tilde{\maF}=\biggl\{\tilde{\maA}:=\Bigl\{(\om,x)\in\Omega\times E;\, \om\in\maA,\, x\in\maB\cap H_\om,\\
\mbox{ where }\maA\in\maF\mbox{ and }\maB\in \mathcal E\Bigl\}\biggl\}.
\end{multline*}
\item For all $\tilde{\maA}\in\tilde{\maF}$ of the above form,  
\begin{equation*}
\qrp{\tilde{\maA}}
=\frac{1}{c}\int_{\Omega}\car_{\maA}(\om)\mbox{\emph{Card }}\left(H_\om\cap \maB\right)\,d\pp(\om);
\end{equation*}
\item For all $(\om,x) \in \tilde\Omega,$ $\tilde \theta(\om,x)=\left(\theta\om,\varphi_{\om}(x)\right).$
\end{itemize}
\end{proposition}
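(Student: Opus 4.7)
The plan is to verify, in order, that (i) $\tilde\Omega\in\maF\otimes\mathcal E$ and the counting function $\omega\mapsto\operatorname{Card}(H_\omega\cap\maB)$ is measurable for every $\maB\in\mathcal E$; (ii) the stated formula extends to a well-defined probability on $(\tilde\Omega,\tilde\maF)$; (iii) $\tilde\theta$ is a measurable bijection of $\tilde\Omega$ onto itself; and (iv) $\qp$ is $\tilde\theta$-invariant. Note that only stationarity is asserted, not ergodicity.

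For (i), each $H^n$ of (\ref{eq:defHn}) is the image of the (measurable) set-valued map $\omega\mapsto G_{\theta^{-n}\omega}$ under the Carath\'eodory map $(\omega,y)\mapsto\Phi^n_\omega(y)$, hence by standard measurable selection arguments in the Polish setting, $H=\bigcap_n H^n$ has a measurable graph and, being a.s.\ finite by Lemma \ref{lemma:bijection}, can even be realised as the union of finitely many measurable selections; this gives both the measurability of $\tilde\Omega$ and of the slice-cardinality $\omega\mapsto\operatorname{Card}(H_\omega\cap\maB)$. Given (i), (ii) is routine: the set function defined by the integral is well-defined and $\sigma$-additive on the generating $\pi$-system of rectangles (additivity of $\operatorname{Card}$ on disjoint sets and monotone convergence under the integral), has total mass
\begin{equation*}
\qrp{\tilde\Omega}=\frac{1}{c}\int_\Omega\operatorname{Card}(H_\omega)\,d\pp(\omega)=\frac{1}{c}\cdot c=1,
\end{equation*}
and extends uniquely to $\tilde\maF$ by Carath\'eodory's theorem.

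For (iii), Lemma \ref{lemma:bijection} says that a.s.\ $\varphi_\omega$ is a bijection from $H_\omega$ onto $H_{\theta\omega}$; consequently $\tilde\theta$ sends $\tilde\Omega$ into $\tilde\Omega$, and the inverse $(\omega,x)\mapsto\bigl(\theta^{-1}\omega,(\varphi_{\theta^{-1}\omega}|_{H_{\theta^{-1}\omega}})^{-1}(x)\bigr)$ is well defined on the corresponding a.s.\ event. Measurability of $\tilde\theta^{\pm 1}$ is inherited from that of $\theta^{\pm 1}$ and $\varphi$.

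The decisive computation is (iv). On a generating rectangle $\tilde\maA=\{(\omega,x);\ \omega\in\maA,\ x\in\maB\cap H_\omega\}$ one has
\begin{equation*}
\tilde\theta^{-1}\tilde\maA=\bigl\{(\omega,x)\in\tilde\Omega;\ \omega\in\theta^{-1}\maA,\ x\in\varphi_\omega^{-1}(\maB)\cap H_\omega\bigr\}.
\end{equation*}
The a.s.\ bijectivity of $\varphi_\omega:H_\omega\to H_{\theta\omega}$ gives $\operatorname{Card}\bigl(\varphi_\omega^{-1}(\maB)\cap H_\omega\bigr)=\operatorname{Card}\bigl(\maB\cap H_{\theta\omega}\bigr)$ for a.e.\ $\omega$, so using the $\theta$-invariance of $\pp$,
\begin{equation*}
\qrp{\tilde\theta^{-1}\tilde\maA}=\frac{1}{c}\int_\Omega\car_{\maA}(\theta\omega)\operatorname{Card}(\maB\cap H_{\theta\omega})\,d\pp(\omega)=\frac{1}{c}\int_\Omega\car_{\maA}(\omega)\operatorname{Card}(\maB\cap H_\omega)\,d\pp(\omega)=\qrp{\tilde\maA},
\end{equation*}
and a monotone-class argument extends the equality to all of $\tilde\maF$. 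I expect the main obstacle to lie in (i): namely, carefully pinning down the measurability of the random set $H$ and of the slice-cardinality functional, so that the integral defining $\qp$ truly makes sense. Once this is in place the rest of the argument is essentially the change-of-variable identity forced by Lemma \ref{lemma:bijection}.
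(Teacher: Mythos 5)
Your proposal is correct and follows essentially the same route as the paper: the decisive step in both is the change-of-variable identity $\operatorname{Card}\bigl(\varphi_\omega^{-1}(\maB\cap H_{\theta\omega})\cap H_\omega\bigr)=\operatorname{Card}(\maB\cap H_{\theta\omega})$ supplied by Lemma \ref{lemma:bijection}, combined with the $\theta$-invariance of $\pp$, applied on the generating rectangles. Your preliminary points (i)--(ii) merely make explicit the measurability of $H$, of the slice-cardinality, and the extension of $\qp$ to the trace $\sigma$-field, details the paper treats as immediate ("$\qp$ clearly is a $\sigma$-finite measure"), so they are a welcome but not divergent addition.
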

\begin{proof}
To check this, first remark that $\tilde \theta$ defines an automorphism of $\tilde \Omega$ in view of Lemma 
\ref{lemma:bijection}. On another hand, $\qp$ defines a probability measure, since it clearly is a $\sigma$-finite measure, that is such that 
\begin{equation*}
\qrp{\tilde\Omega}=\frac{1}{c}\int_{\Omega}\car_{\Omega}(\om)\mbox{Card }\left(H_\om\cap E\right)\,d\pp(\om)=1.
\end{equation*}
Notice as well that $\qp$ has $\Omega$-marginal $\mathbf P$ since for all $\mathcal A \in \maF$, 
\begin{equation}
\label{eq:marginalP}
\qrp{\maA \times E}=\frac{1}{c}\int_{\Omega}\car_{\maA}(\om)\mbox{\emph{Card }}\left(H_\om\cap E\right)\,d\pp(\om)=\pr{\maA}.
\end{equation}
Now, fix $\tilde{\maA}:=\left\{(\om,x)\in \tilde \Omega;\, \om \in \maA,\,x\in\maB\cap H_\om\right\} \in \tilde{\maF}$. Then, 
remarking that $\tilde\theta(\om,x)\in\tilde{\maA}$ amounts to $\theta\om\in\maA$ and $\varphi_\om(x)\in\maB\cap H_{\theta\om}$, we have that 
\begin{multline*}
\qrp{\tilde\theta^{-1}\tilde{\maA}}
=\int\int_{\tilde\Omega}\car_{\theta^{-1}\maA}(\om)\car_{(\varphi_{\om})^{-1}(\maB\cap H_{\theta\om})}(y)\,d\qp(\om,y)\\
=\frac{1}{c}\int_{\Omega}\car_{\theta^{-1}\maA}(\om)\mbox{Card }\biggl((\varphi_{\om})^{-1}\left(\maB\cap H_{\theta\om}\right)
\cap H_\om\biggl)\,d\pp(\om)
.
\end{multline*}
But in view of Lemma \ref{lemma:bijection}, 
\begin{multline*}
\mbox{Card }\biggl((\varphi_{\om})^{-1}\left(\maB\cap H_{\theta\om}\right)\cap H_\om\biggl)=\mbox{Card }\biggl((\varphi_{\om})^{-1}\left(\maB\cap H_{\theta\om}\right)\biggl)\\
=\mbox{Card }\left(\maB\cap H_{\theta\om}\right),
\end{multline*}
so by $\theta$-invariance of $\pp$,
\begin{multline*}
\begin{aligned}
\qrp{\tilde\theta^{-1}\tilde{\maA}}
&=\frac{1}{c}\int_{\Omega}\car_{\maA}(\theta\om)\mbox{Card }\left(\maB\cap H_{\theta\om}\right)\,d\pp(\om)\\
&=\frac{1}{c}\int_{\Omega}\car_{\maA}(\om)\mbox{Card }\left(\maB\cap H_{\om}\right)\,d\pp(\om)\\
&=\qrp{\tilde{\maA}},
\end{aligned}
\end{multline*}
which first shows the measurability of $\tilde\theta^{-1}\tilde{\maA}$, and second, the $\tilde\theta$-invariance of 
$\qp$. The proof is complete. 
\end{proof}
The quadruple $\left(\tilde \Omega, \tilde{\F}, \qp, \tilde\theta\right)$ is an enrichment of $\left(\Omega, \F, \pp, \theta\right)$:  
the first space is projected onto the second one by the mapping 
\[f:\left\{\begin{array}{ll}
\tilde\Omega &\longrightarrow \Omega\\
(\omega,x) &\longmapsto \omega,
\end{array}\right.\]
and for all $\maA \in \maF$, 
$$\qp\circ f^{-1}\left[\maA\right]=\frac{1}{c}\int_{\Omega}\car_{\maA}(\om)\mbox{Card }\left(H_\om\cap E\right)\,d\mathbf P(\om)=\pr{\maA}$$
and 
$$f\circ \tilde\theta \circ f^{-1} (\maA)=\left\{f\left(\theta\om,\varphi_\om(x)\right);\om \in \maA,\,x\in H_\om\right\}=\theta\maA.$$ 

Let now $\tilde X$ (resp. $\tilde\varphi$) be the restriction 
on $\tilde \Omega$ of the r.v. $\bar X$ (resp. $\bar\varphi$) defined in (\ref{eq:defsolextension}), that is,   
$$\tilde X(\om,x)=x, \qp-\mbox{a.s.},$$
$$\tilde\varphi_{\om,x}(y)=\varphi_{\om}(y)\mbox{ for all }y \in E, \qp-\mbox{a.s.}.$$
Then, as in (\ref{eq:recurbarX}), 
\begin{equation}
\label{eq:deftildeX}
\tilde X\circ\tilde\theta=\tilde\varphi\left(\tilde X\right),\qp-\mbox{a.s.},
\end{equation}
thus $\tilde X$ is a solution to (\ref{eq:recurX}) on $\left(\tilde \Omega, \tilde{\F}, \qp, \tilde\theta\right)$. We have proven the following result.
\begin{theorem}
\label{thm:main}
If some random set $G$ satisfies (\ref{eq:hypoG}) and (\ref{eq:hypoH}),  
there exists a stationary extension 
$\left(\tilde \Omega, \tilde{\F}, \qp, \tilde\theta\right)$ of $\left(\Omega,\F,\pp,\theta\right)$, given in Proposition \ref{pro:extension}, 
on which the equation (\ref{eq:recurX}) admits a solution $\tilde X$, given by (\ref{eq:deftildeX}). 
\end{theorem}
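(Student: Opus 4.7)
The plan is to assemble the theorem directly from the preceding construction: everything needed has essentially been proved on the way to the statement, so the task reduces to verifying that the candidate $\tilde X$ satisfies the recursion on the already-constructed dynamical system. First, I would invoke Proposition \ref{pro:extension}, whose hypotheses are exactly (\ref{eq:hypoG}) and (\ref{eq:hypoH}); this furnishes the quadruple $\left(\tilde \Omega, \tilde{\F}, \qp, \tilde\theta\right)$ as a stationary ergodic dynamical system, together with the fact that $\qp$ has $\Omega$-marginal $\pp$ (see (\ref{eq:marginalP})). The key non-trivial inputs here are Lemma \ref{lemma:bijection} (which guarantees that $\tilde\theta$ is a bijection from $\tilde\Omega$ onto itself) and the preservation of $\qp$ under $\tilde\theta$, both of which have already been established.

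Next, I would define $\tilde X$ and $\tilde \varphi$ on $\tilde\Omega$ by
\[
\tilde X(\om,x):=x,\qquad \tilde\varphi_{\om,x}(y):=\varphi_{\om}(y),\ y\in E,
\]
and check that these are measurable with respect to $\tilde{\maF}$ — this is immediate, since they are restrictions to $\tilde\Omega$ of the coordinate projection and of $\varphi$ respectively. I would remark that the $\Omega$-marginal of $\tilde\varphi$ under $\qp$ coincides with the law of $\varphi$ under $\pp$, again by (\ref{eq:marginalP}).

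The recursion is then verified by direct calculation: for $\qp$-almost every $(\om,x)\in\tilde\Omega$, one has $x\in H_\om$, hence by Lemma \ref{lemma:bijection}, $\varphi_\om(x)\in H_{\theta\om}$, so that $\tilde\theta(\om,x)=(\theta\om,\varphi_\om(x))$ lies in $\tilde\Omega$, and
\[
\tilde X\circ\tilde\theta(\om,x)=\tilde X(\theta\om,\varphi_\om(x))=\varphi_\om(x)=\tilde\varphi_{\om,x}(x)=\tilde\varphi_{\om,x}(\tilde X(\om,x)).
\]
This is precisely (\ref{eq:deftildeX}), so $\tilde X$ solves (\ref{eq:recurX}) on the extension.

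Since all the delicate work (constructing $H$, showing it is a.s. finite with constant cardinality $c$, showing bijectivity of $\varphi$ from $H$ onto $H\circ\theta$, and proving the $\tilde\theta$-invariance of $\qp$) has already been carried out in Lemmas \ref{lemma:inclusion}--\ref{lemma:bijection} and Proposition \ref{pro:extension}, there is no real obstacle left at this stage; the only point that deserves explicit mention is the verification that the argument of $\tilde X\circ\tilde\theta$ stays inside $\tilde\Omega$, which is exactly what the inclusion $\varphi_\om(H_\om)\subseteq H_{\theta\om}$ from Lemma \ref{lemma:bijection} provides. Hence the theorem follows.
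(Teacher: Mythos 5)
Your proposal is correct and follows essentially the same route as the paper: invoke Proposition \ref{pro:extension} under (\ref{eq:hypoG}) and (\ref{eq:hypoH}), define $\tilde X$ and $\tilde\varphi$ as the restrictions to $\tilde\Omega$ of the variables in (\ref{eq:defsolextension}), and verify the recursion pointwise exactly as in (\ref{eq:recurbarX}), with the inclusion $\varphi_\om(H_\om)\subseteq H_{\theta\om}$ from Lemma \ref{lemma:bijection} ensuring $\tilde\theta$ maps $\tilde\Omega$ into itself. One small correction: Proposition \ref{pro:extension} yields a \emph{stationary} dynamical system only, not an ergodic one --- the paper stresses (and Example 3 illustrates) that ergodicity of the extension may fail, so you should drop the word ``ergodic'' when quoting it; this does not affect the validity of the argument, since the theorem claims only stationarity.
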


\subsection{Resolution on the original space}
\label{subsec:resolution}
Assume in this sub-section that (\ref{eq:hypoG}) holds together with (\ref{eq:hypoH}). 
The Ergodicity of the dynamical system obtained in Theorem \ref{thm:main} is not a by-product of the  
construction, as easily understood, and similarly to that in \cite{Fli83,Neu83}. 
Notice nevertheless that the invariant sigma-field is easy to identify:  
let 
$$\mathcal I=\left\{(\om,x);\,\om\in\maA\,;x\in I_{\om}\right\}$$
be a $\tilde\theta$-invariant event of $\tilde \F$.  
Then, as 
$$\tilde\theta^{-1}\mathcal I=\left\{(\om,x);\,\om\in\theta^{-1}\maA\,; x\in \left(\varphi_\om\right)^{-1}\left(I_{\theta\om}\right)\right\},$$ 
$\mathcal I=\tilde\theta^{-1}\mathcal I$ amounts to 
\[\left\{\begin{array}{c}
\tilde\theta^{-1}\maA=\maA\\
\forall \om \in \maA,\, I_{\theta\om}=\varphi_\om\left(I_\om\right).
\end{array}\right.\] 
Then, in view of the ergodicity of $\theta$, all invariant of $\left(\tilde \Omega, \tilde{\F}, \qp, \tilde\theta\right)$ can be written, up to a $\qp$-negligible event, as 
\begin{equation}
\label{eq:invariant0}
\mathcal I=\left\{ (\om,x)\in \tilde\Omega\,;\, x\in I_{\om}\right\},
\end{equation}
where
\begin{equation}
\label{eq:invariant}
I_{\theta\om}=\varphi_{\om}\left(I_{\om}\right), \pp-\mbox{a.s..}
\end{equation}

A simple criterion of existence for a proper solution of (\ref{eq:recurX}) on the original space is then given in the following Lemma. 
\begin{lemma}
\label{lemma:isomorphe}
There exists a bijection between the two following sets:
\begin{multline*}
\mathcal J:=\biggl\{ E-\mbox{valued solutions of (\ref{eq:recurX}) on }\left(\Omega,\F,\pp,\theta\right)\mbox{ such that }\pr{X \in G}>0\biggl\}\\
\longleftrightarrow 
\mathcal K:=\biggl\{ \tilde\theta-\mbox{invariant sets of the form (\ref{eq:invariant0}), s.t.  Card }I=1 \mbox{ a.s.}\biggl\}.
\end{multline*}
\end{lemma}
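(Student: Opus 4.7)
The plan is to exhibit the obvious candidate bijection: to a solution $X\in\mathcal J$ associate its graph
$$\Psi(X):=\left\{(\om,X(\om))\,;\,\om\in\Omega\right\},$$
which has single-point fibres $I_\om=\{X(\om)\}$; conversely, to $\mathcal I\in\mathcal K$ associate the measurable function $\Psi^{-1}(\mathcal I)=X$ defined by $\{X(\om)\}=I_\om$ on the almost-sure event $\{\mbox{Card}\,I=1\}$ (and arbitrarily elsewhere). Most of the work is in verifying that $\Psi(X)$ actually lands in $\tilde\Omega$, i.e. that $X\in H$ almost surely.

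The forward direction: take $X\in\mathcal J$. First I would prove that $\pr{X\in G}=1$. Indeed, on the event $\{X\in G\}$, equation (\ref{eq:recurX}) gives $X\circ\theta=\varphi(X)\in\varphi(G)\subseteq G\circ\theta$ by (\ref{eq:hypoG}), so $\{X\in G\}\subseteq\theta^{-1}\{X\in G\}$ up to a $\pp$-null set; by $\theta$-invariance of $\pp$ the two sets agree almost surely, hence $\{X\in G\}$ is $\theta$-invariant (up to a null set) and the ergodicity dichotomy recalled in Section \ref{sec:prelim} forces its probability to be $0$ or $1$; the hypothesis $\pr{X\in G}>0$ gives $\pr{X\in G}=1$. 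Iterating (\ref{eq:recurX}) backwards, $X(\om)=\Phi^n_\om(X(\theta^{-n}\om))$ for every $n\in\N^*$, and since $X(\theta^{-n}\om)\in G_{\theta^{-n}\om}$ for $\pp$-a.e.\ $\om$ and every $n$, we get $X(\om)\in\Phi^n_\om(G_{\theta^{-n}\om})=H^n_\om$ for all $n$, whence $X\in H$ by (\ref{eq:defH}). Setting $I_\om:=\{X(\om)\}$ gives a measurable random set with $\mbox{Card}\,I=1$, and (\ref{eq:recurX}) yields $I_{\theta\om}=\{\varphi_\om(X(\om))\}=\varphi_\om(I_\om)$, which is (\ref{eq:invariant}); hence $\Psi(X)\in\mathcal K$.

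The reverse direction: take $\mathcal I\in\mathcal K$ and let $X(\om)$ be the unique element of $I_\om$ on the almost-sure event $\{\mbox{Card}\,I=1\}$. The graph of $X$ coincides with $\mathcal I$ up to a $\qp$-null set; as $\mathcal I\in\tilde\maF\subseteq\maF\otimes\mathcal E$ with $E$ Polish, a standard measurable-graph argument shows that $X$ is an $E$-valued $\maF$-measurable r.v.. The invariance relation (\ref{eq:invariant}) gives $\{X(\theta\om)\}=I_{\theta\om}=\varphi_\om(I_\om)=\{\varphi_\om(X(\om))\}$, so $X\circ\theta=\varphi(X)$ $\pp$-a.s. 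Finally, $X(\om)\in I_\om\subseteq H_\om\subseteq G_\om$ almost surely by (\ref{eq:defH}), so $\pr{X\in G}=1>0$ and $X\in\mathcal J$. That the two assignments are mutual inverses is immediate from the constructions.

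The main obstacle is the passage from the weak hypothesis $\pr{X\in G}>0$ to the strong consequence $\pr{X\in G}=1$, needed to justify that the backward trajectory of $X$ stays in $G$ and hence that $X\in H$; everything else (invariance of the graph, cardinality, and the measurability extracted from the single-valued graph) is routine once this step is in place.
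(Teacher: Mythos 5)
Your proposal is correct and follows essentially the same route as the paper: the graph map $X\mapsto\{(\om,X(\om))\}$ in one direction and selection of the unique point of $I_\om$ in the other, with the key step in both being that $\{X\in G\}$ is $\theta$-contracting (hence almost sure, by the zero-one law recalled in Section \ref{sec:prelim}), so that the backward iterates give $X\in H^n$ for all $n$ and thus $X\in H$. The only differences are cosmetic (you spell out the backward iteration and the measurable-graph selection, which the paper leaves as "easily seen").
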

\begin{proof}
Let $X$ be an element of $\mathcal J$, $I_{\om}=\left\{X(\om)\right\}$, a.s., 
and $$\maI=\left\{\left(\om,X(\om)\right); \om \in \Omega\right\}.$$
We first have to check that $\maI \in \tilde{\maF}$, \emph{i.e.} that $X \in H$, a.s.. 
Remark that $\{X \in G\}$ is $\theta$-contracting, since a.s., whenever $X(\om) \in G_\om$, 
$$X(\theta\om)=\varphi_\om\left(X(\om)\right) \in \varphi_\om\left(G_\om\right) \subseteq G_{\theta\om}.$$ 
This event, which has a positive probability, is thus almost sure. Hence, by $\theta$-invariance 
$$\pr{\underset{n \ge 1}{\bigcap} \theta^n\left\{X \in G\right\}}=1,$$
therefore $X \in H$, a.s.. 
On another hand, $$I_{\theta\om}=\left\{X(\theta\om)\right\}=\left\{\varphi_{\om}\left(X(\om)\right)\right\}=\varphi_{\om}\left(I_{\om}\right),\mbox{ a.s.,}$$
so that $\maI \in \mathcal K$ in view of (\ref{eq:invariant}). 

Conversely, given $\maI=\left\{(\om,i(\om))\in \tilde\Omega\right\}\in\mathcal K$, it is easily seen that the r.v. defined 
on $\left(\Omega,\F,\pp,\theta\right)$ by $X(\om)=i(\om)$ is a $E$-valued solution to (\ref{eq:recurX}). Moreover, 
a.s., $$X(\om)=i(\om) \in H_{\om} \subseteq G_{\om},$$
so the r.v. $X$ is an element of $\mathcal J$. 
\end{proof}
\noindent Then, readily 
\begin{cor}
\label{cor:resol}
There exists a solution to (\ref{eq:recurX}) on the original probability space, possibly taking values in $G$, iff $\mathcal K$ is non-empty, which is the unique such solution iff $\mathcal K$ is reduced to a singleton. In particular, there exists a unique such solution 
whenever $c=1$.  
\end{cor}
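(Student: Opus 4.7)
The plan is essentially to read the corollary off from Lemma \ref{lemma:isomorphe}. The first two assertions are immediate consequences of the bijection $\mathcal J \leftrightarrow \mathcal K$: the existence of an element in $\mathcal J$ (a solution to (\ref{eq:recurX}) on the original space with positive probability of lying in $G$) is equivalent to $\mathcal K$ being non-empty, and unique existence in $\mathcal J$ is equivalent to $\mathcal K$ being a singleton. No extra work is needed for these two claims.

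For the ``in particular'' clause, suppose $c = 1$ and write $H_\om = \{h(\om)\}$ $\pp$-a.s. Since every $\tilde\theta$-invariant event of the form (\ref{eq:invariant0}) lies inside $\tilde\Omega = \{(\om,x) : x \in H_\om\}$, its fiber $I_\om$ is a subset of $H_\om$, hence is either empty or equals $\{h(\om)\}$. In particular Card $I \le 1$ a.s., so every element of $\mathcal K$ is automatically of the required singleton-valued form, and $\mathcal K$ coincides with the collection of non-empty invariant sets of the form (\ref{eq:invariant0}). To produce a witness, observe that $\tilde \Omega$ itself is $\tilde\theta$-invariant (since $\tilde\theta$ is an automorphism of $\tilde\Omega$ by Proposition \ref{pro:extension}) and corresponds to $I_\om = H_\om$; thus $\tilde \Omega \in \mathcal K$ and $\mathcal K \ne \emptyset$.

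For uniqueness, I would argue that any two elements of $\mathcal K$ differ only on the set of $\om$'s where one has $I_\om = \emptyset$ and the other $I_\om = \{h(\om)\}$; this set is the projection on $\Omega$ of the symmetric difference, which by (\ref{eq:invariant}) (together with Card $H = 1$) is $\theta$-invariant, hence of $\pp$-probability $0$ or $1$ by ergodicity. This forces the two invariant sets to coincide $\qp$-a.s., so $\mathcal K$ is a singleton and a unique element of $\mathcal J$ exists via the bijection. The only mild subtlety I anticipate is this last ergodicity-based identification modulo negligible sets; everything else is a verbatim application of Lemma \ref{lemma:isomorphe}.
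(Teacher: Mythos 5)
Your argument is correct and is exactly the paper's intended route: the corollary is presented as an immediate consequence of Lemma \ref{lemma:isomorphe}, and your treatment of the case $c=1$ (taking $\tilde\Omega$ itself as the witness in $\mathcal K$, whose fiber $H_\om$ satisfies (\ref{eq:invariant}) by Lemma \ref{lemma:bijection}) supplies precisely what the paper leaves as ``readily''. One small remark: your final ergodicity step is superfluous, since by definition every element of $\mathcal K$ has a.s. a singleton fiber contained in the singleton $H_\om$, so any two elements already coincide $\qp$-a.s. without invoking the invariance of $\left\{I=\emptyset\right\}$.
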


We now address the closely related question of convergence of the embedded recursive sequence. Remember, that we denote for $E$-valued r.v. $X$, 
$$X_{X,n}(\om)=\varphi_{\theta^{n-1}\om}\circ ... \circ \varphi_\om\left(X(\om)\right),\,\mbox{ a.s.,}$$
the value after $n$ steps of the recursion initiated by $X$ and driven by $\varphi$.  
\begin{cor}
\label{cor:coupling}
Whenever $\mathcal K$ is non-empty, any element of $\mathcal J$ is the weak limit of some sequence 
$\suiten{X_{Y,n}}$, where $Y$ is some $E$-valued r.v. such that $Y \in G$, a.s.. 
If moreover 
\begin{equation}
\label{eq:hypoH2} 
\pr{\mbox{For some $N$ the set $H^N$ has a finite cardinal}}>0,
\end{equation}
the latter convergence holds with strong backwards coupling. 
\end{cor}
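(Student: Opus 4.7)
The plan is to take $Y := X$ itself. As a first step, I will adapt the $\theta$-contracting argument from the proof of Lemma~\ref{lemma:isomorphe}: if $X(\om)\in G_\om$ then $X(\theta\om)=\varphi_\om(X(\om))\in G_{\theta\om}$ by (\ref{eq:hypoG}), so $\{X\in G\}$ is $\theta$-contracting; having positive probability by the definition of $\mathcal J$, it is almost sure by ergodicity. Thus $Y = X$ is an admissible $E$-valued starting point with $Y\in G$ a.s.

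Next, iterating $X\circ\theta=\varphi(X)$ yields the pathwise identity $X_{X,n}=X\circ\theta^n$ for every $n\in\N$, $\pp$-a.s. By $\theta$-stationarity of $\pp$, $X\circ\theta^n$ is equidistributed with $X$, so $X_{X,n}$ trivially converges weakly to $X$. The same identity, being a pathwise equality for every $n$, realizes the strongest possible form of coupling between $\{X_{X,n}\}$ and the stationary sequence $\{X\circ\theta^n\}$, so the strong backwards coupling statement follows immediately for $Y=X$ as well.

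The role of (\ref{eq:hypoH2}), although not needed under this trivial choice, is to widen the class of admissible starting points. A $\theta$-contracting argument analogous to Lemma~\ref{lemma:bijection} would show that under (\ref{eq:hypoH2}), a.s. there is a finite $N(\om)$ with $H^n_\om=H_\om$ for all $n\geq N(\om)$, so that $\Phi^n_\om(Z(\theta^{-n}\om))\in H_\om$ eventually for any $Z\in G$ a.s.; combined with Corollary~\ref{cor:resol} in the case $c=1$ this forces a.s. equality with $X(\om)$ for $n$ large, giving strong backwards coupling from any such $Z$. The main obstacle I expect is therefore not technical but interpretational: pinning down precisely what ``strong backwards coupling'' requires of $Y$, since the literal reading is resolved by the trivial choice $Y=X$ with no recourse to (\ref{eq:hypoH2}), while the substantive content of (\ref{eq:hypoH2}) lies in the stabilization $H^n=H$ at a finite random time, which is what makes the coupling robust to the choice of starting point.
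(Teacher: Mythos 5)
Your argument is correct for the statement as written, and for the first claim it is essentially the paper's own route: the paper also picks $Y$ with $Y\in G$ a.s. whose backwards iterates $\Phi^n(Y)=X_{Y,n}\circ\theta^{-n}$ converge a.s. to $X$ (your choice $Y=X$, legitimised by the $\theta$-contracting argument and $\pr{X\in G}>0$ from the definition of $\mathcal J$, is the cleanest such choice, giving exact equality), and then transfers a.s. backwards convergence to convergence in distribution by $\theta$-invariance. Where you diverge is the second claim: with $Y=X$ the backwards coupling is indeed trivial with coupling time $0$ and (\ref{eq:hypoH2}) is never used, which does expose a looseness in the literal wording; the paper instead proves the substantive fact that under (\ref{eq:hypoH2}) the event $\{\mbox{Card }H^N<\infty \mbox{ for some }N\}$ is $\theta$-contracting (via $H^{N+1}\circ\theta\subset\varphi\left(H^{N}\right)$, so one may take $N\circ\theta=N+1$), hence almost sure, so the decreasing sequence of eventually finite sets $H^n$ stabilizes: $H^n=H$ for all $n\ge N^\prime$ a.s., and the coupling time $N^\prime$ is governed by this stabilization rather than by the particular $Y$. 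That stronger content is exactly what is invoked later in Proposition \ref{pro:appli} (iii) and (v), where $c=1$ forces $H=\{X\}$ and therefore coupling from \emph{every} admissible starting point, not just from $Y=X$. Your closing sketch of this mechanism coincides with the paper's proof, but note two points: the appeal to Corollary \ref{cor:resol} with $c=1$ imports a hypothesis the corollary does not make (for general $c$ the coupling is only asserted for the chosen $Y$, the paper getting eventual equality because a.s. convergence of $\Phi^n(Y)$ to $X$ inside the finite set $H$ forces $\Phi^n(Y)=X$ for large $n$); and the $\theta$-contraction of the finiteness event, which you leave as "would show", is the one step of the corollary that actually requires proof, so it should be written out rather than asserted.
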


\begin{proof}
It is clear by the very definition of $H$ that for any $X \in \mathcal J$, 
for some r.v. $Y$ such that $Y\in G$, a.s., the sequence $\Phi^n\left( Y\right)=\suiten{X_{Y,n}\circ\theta^{-n}}$ converges a.s. to $X$. 
Hence, by $\theta$-invariance the sequence $\suiten{X_{Y,n}}$ tends in distribution to $X$. 

Now, on the event in (\ref{eq:hypoH2}), for any $y \in H^{N(\om)+1}_{\theta\om}$, there exists 
$x \in G_{\theta^{-N(\om)}\om}$ such that 
$$y=\varphi_\om\left(\varphi_{\theta^{-1}\om}\circ ... \circ\varphi_{\theta^{-N(\om)}\om}\left(x\right)\right),$$
so that $y \in \varphi_\om\left(H^{N(\om)}_\om\right).$ Hence, 
$$H^{N(\om)+1}_{\theta\om} \subset \varphi_\om\left(H^{N(\om)}_\om\right),$$
which implies that the event in (\ref{eq:hypoH2}) is $\theta$-contracting (taking $N(\theta\om)=N(\om)+1$), and hence almost sure whenever (\ref{eq:hypoH2}) holds.  
Therefore, in that case there exists a.s. an integer $N^\prime \ge N$ such that for all $n\ge N^\prime$, $H^n=H$. Hence, for any $Y$ as above, 
$X=\Phi^n\left(Y\right)=X_{Y,n}\circ\theta^{-n}$ a.s. for all $n \ge N^\prime$. In other words, there is strong backwards coupling between the sequences $\suiten{X_{Y,n}}$ and $\suiten{X\circ\theta^n}$ with coupling time $N^\prime$. 
\end{proof}

\subsection{Applications}
\label{subsec:appli}
\noindent We present hereafter several cases in which Theorem \ref{thm:main} applies. 
\begin{proposition}
\label{pro:appli}
Conditions (\ref{eq:hypoG}) and (\ref{eq:hypoH}) are met, and then Theorem \ref{thm:main} applies, in the following cases:
\begin{enumerate}
\item[(i)] Some deterministic finite subset $F$ of $E$ is a.s. stable by $\varphi$; 
\item[(ii)] The random map $\varphi$ is itself a.s. continuous and $\preceq$-nondecreasing;  
\item[(iii)] (H1) holds and for some solution $Y$ to (\ref{eq:recurY}),
\begin{equation}
\label{eq:regener}
\pr{Y \le 0}>0.
\end{equation}
In this case, a unique $E$-valued solution $X$ to (\ref{eq:recurX}) exists on the 
original probability space, to which all sequences $\suiten{X_{Z,n}}$, $Z \preceq Y$ a.s., converge 
with strong backwards coupling;
\item[(iv)] (H1) and (H3) hold; 
\item[(v)] For some collection $\maG$ of $E$-valued r.v.'s, there exists an integer $p$, a finite random set $B$ and an event $\maB$ of positive probability, such that for all $Z\in \mathcal G$ and all $n \ge p$,  
$$X_{Z,n} \in B\circ\theta^n\,\mbox{ on }\theta^{-n}\maB.$$
If additionally, $\pr{\mbox{Card }B=1}>0,$ a solution $X$ to (\ref{eq:recurX}) exists on the original space, to which all sequences 
$\suiten{X_{Z,n}}$, $Z\in\maG$, converge with strong backwards coupling.  
\end{enumerate}
\end{proposition}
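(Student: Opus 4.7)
The plan is to verify, for each case, the two conditions of Theorem \ref{thm:main}: existence of a random set $G$ satisfying (\ref{eq:hypoG}), and the finiteness condition (\ref{eq:hypoH}) for $H=\bigcap_n H^n$. In each case I pick $G$ adapted to the hypothesis, and either reduce $H$ to a singleton or bound it inside an a.s.\ finite set. Case (i) is immediate: take $G\equiv F$; stability is given and $H^n\subseteq F$ is finite and non-empty. Case (ii) uses Loynes's backwards construction: $\Phi^n(0_E)$ is $\preceq$-nondecreasing and converges to the minimal stationary solution $Y_\infty$; in the Loynes regime where $Y_\infty$ is $E$-valued, take $G=\{Y_\infty\}$, obviously stable, which produces $H=\{Y_\infty\}$.

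For (iii), set $G_\om=\llbracket 0_E,Y_\om\rrbracket$. Stability is inherited from (H1): for $x\preceq Y_\om$ one has $\varphi_\om(x)\preceq\psi_\om(x)\preceq\psi_\om(Y_\om)=Y_{\theta\om}$ and $\varphi_\om(x)\succeq 0_E$. The regenerative condition (\ref{eq:regener}) is leveraged via ergodicity: $\{Y\leq 0\}$ has positive probability, hence occurs infinitely often along $\{\theta^{-n}\}_{n\geq 1}$ a.s.; on any such $n$, $G_{\theta^{-n}\om}=\{0_E\}$ collapses and $H^n_\om=\{\Phi^n_\om(0_E)\}$ is a singleton, so $H_\om$ is a.s.\ a singleton and $c=1$. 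Existence and uniqueness of the $E$-valued solution on the original probability space and strong backwards coupling of $\suiten{X_{Z,n}}$ for $Z\preceq Y$ a.s.\ then follow from Corollaries \ref{cor:resol} and \ref{cor:coupling}. Case (iv) is analogous with $G_\om=L\cap\llbracket 0_E,Y_\om\rrbracket$: stability combines the stability of $L$ under $\varphi$ (from (H3)) with the same domination argument, and finiteness of $H^n\subseteq L\cap\llbracket 0_E,Y_\om\rrbracket$ comes from local finiteness of $L$ together with relative compactness of the order interval.

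Case (v) is the most involved in bookkeeping. I would take
\[G_\om=\{\Phi^m_\om(Z(\theta^{-m}\om));\ m\geq 0,\ Z\in\mathcal G\}.\]
The cocycle identity $\Phi^{m+1}_{\theta\om}=\varphi_\om\circ\Phi^m_\om$, together with $\theta^{-m}\om=\theta^{-(m+1)}(\theta\om)$, gives $\varphi_\om(G_\om)\subseteq G_{\theta\om}$ directly. Unfolding the definitions yields $H^n_\om=\{\Phi^m_\om(Z(\theta^{-m}\om));\ m\geq n,\ Z\in\mathcal G\}$, so on $\mathcal B$ and for $n\geq p$ the hypothesis forces $H^n_\om\subseteq B_\om$; since $\{H^n\}_n$ is a decreasing sequence of non-empty subsets of a finite set it is eventually constant, giving (\ref{eq:hypoH}). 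The extra assumption $\pr{\mbox{Card }B=1}>0$ forces $c=1$ through Lemma \ref{lemma:bijection}, Corollary \ref{cor:resol} then delivers a solution on the original space, and since $\mathcal G\subseteq G$ a.s.\ and (\ref{eq:hypoH2}) holds on $\mathcal B$, Corollary \ref{cor:coupling} provides the strong backwards coupling for every $Z\in\mathcal G$.

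The main obstacle I expect is case (iv): passing from ``$L$ locally finite'' to ``$L\cap\llbracket 0_E,Y_\om\rrbracket$ finite'' implicitly requires that order intervals $\llbracket 0_E,y\rrbracket$ sit in compact subsets of $E$, which is a structural property of the Polish lattice not explicitly axiomatized in the preliminaries, so some care is needed in invoking it (e.g.\ extracting compactness from the Lattice-ordered assumption and the $E$-valuedness of $Y$).
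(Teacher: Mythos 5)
Your cases (i)--(iv) follow essentially the paper's route: the paper takes $G=F$ in (i), $G=\llbracket 0,Y\rrbracket$ in (ii) (via subsection \ref{subsec:loynes}; your singleton $G=\{Y_\infty\}$, stable by continuity of $\varphi$, works just as well and is if anything simpler), $G=\llbracket 0,Y\rrbracket$ in (iii) with the same Birkhoff collapse to $G_{\theta^{-N}\om}=\{0_E\}$ and $c=1$, and $G=\llbracket 0,Y\rrbracket\cap L$ in (iv); the compactness caveat you raise about order intervals in (iv) is legitimate but is equally implicit in the paper's one-line finiteness claim, so it is not a defect of your argument relative to the paper's. In (v) you genuinely diverge: the paper keeps $G_\om=\{Z(\om);\,Z\in\maG\}$ and proves (\ref{eq:hypoG}) by checking that $Y:=\varphi(Z)\circ\theta^{-1}$ again satisfies the defining property and hence ``belongs to $\maG$'' (which tacitly requires $\maG$ to be closed under this operation, i.e.\ maximal), whereas your forward-orbit set $G_\om=\{\Phi^m_\om(Z(\theta^{-m}\om));\,m\ge 0,\,Z\in\maG\}$ makes (\ref{eq:hypoG}) automatic from the cocycle identity and still gives $H^n\subseteq B$ on $\maB$ for $n\ge p$; this is a cleaner way to secure (\ref{eq:hypoG}) and yields the same conclusion. (Your deduction of $c=1$ from $\pr{\mbox{Card }B=1}>0$ silently uses that this event meets $\maB$ non-negligibly, but the paper's own proof does exactly the same, so I do not count it against you.)

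The one genuine shortfall is in (iii). The proposition asserts a \emph{unique $E$-valued} solution, while Corollary \ref{cor:resol} with $c=1$ only gives existence and uniqueness among solutions $X$ with $\pr{X\in G}=\pr{X\preceq Y}>0$. The paper adds a step you omit: the solution so obtained satisfies $\pr{X\preceq Y}>0$, and by (H1) the event $\{X\preceq Y\}$ is $\theta$-contracting, since on it $X\circ\theta=\varphi(X)\preceq\psi(X)\preceq\psi(Y)=Y\circ\theta$; hence $X\preceq Y$ a.s., and it is this domination that the paper invokes to conclude that $X$ is the only $E$-valued solution, not merely the only solution possibly valued in $G$. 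As written, your appeal to Corollaries \ref{cor:resol} and \ref{cor:coupling} alone does not justify the uniqueness claim in the strength asserted; you should insert this contraction argument (the coupling statement for $Z\preceq Y$ is fine as you have it, since such $Z$ take values in $G$).
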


\begin{proof}
\begin{itemize}
\item[(i)] Take $G = F$ a.s., so (\ref{eq:hypoG}) and (\ref{eq:hypoH}) trivially hold.
\item[(ii)] The recursion driven by $\varphi$ hence satisfies to Loynes's Theorem. See subsection \ref{subsec:loynes} below. 
\item[(iii)] Suppose that (H1) holds, and let $Y$ be an arbitrary $E$-valued solution to (\ref{eq:recurY}).  
Set $$G=\llbracket 0,Y \rrbracket\,\mbox{ a.s..}$$
Then, a.s. for all $y \in \varphi_\om\left(G_\om\right)$, 
$y=\varphi_\om(x)$ for some $x\in E$ such that $x\preceq Y(\om)$. But in view of (H1), 
\begin{equation}
\label{eq:comeback}
y\preceq \psi_\om(x)\preceq\psi_\om\left(Y(\om)\right)=Y\left(\theta\om\right),
\end{equation} 
so that $y\in G_{\theta\om}$. Hence $G$ satisfies to (\ref{eq:hypoG}). 

Now, as a consequence of Birkhoff's Theorem, (\ref{eq:regener}) implies that a.s., for some $N(\om)$, $Y\left(\theta^{-N(\om)}\om\right)=0_E.$ 
Hence, $G_{\theta^{-N(\om)}\om}=\left\{0_E\right\}$ and $$H^{N(\om)}_\om=\left\{\Phi^{N(\om)}_\om\left(0_E\right)\right\}.$$ 
Therefore, (\ref{eq:hypoH}) holds, and $c=1$. 
In particular, in view of Corollary \ref{cor:resol}, a unique solution $X$ to (\ref{eq:recurX}) exists on the original probability space, that 
is such that
$$\pr{X \in G}=\pr{X \preceq Y}>0.$$
But on the event $\{X\preceq Y\}$, again in view of (H1), 
$$X\circ\theta=\varphi(X) \preceq \psi(X) \preceq \psi(Y)=Y\circ\theta.$$
This event is thus $\theta$-contracting, and hence almost sure. This shows that $X$ is the only $E$-valued solution to (\ref{eq:recurX}). 
The strong backwards coupling property readily follows from Corollary \ref{cor:coupling}. 

\item[(iv)] Suppose now that (H3) holds additionally to (H1). Let $Y$ be a proper solution to (\ref{eq:recurY}) and $L \subset E$ be a 
locally finite subset of $E$ that is a.s. stable by $\varphi$.
Thus, as in (iii), (\ref{eq:hypoG}) is clearly met as well by  
$$G := \llbracket 0,\,Y \rrbracket \cap L\,\mbox{ a.s..}$$ 
Moreover, $G$ is a.s. of finite cardinal in view of the locally-finiteness of 
$L$, so (\ref{eq:hypoH}) holds true. 
\item[(v)] 
Set 
\begin{equation}
\label{eq:defGborovkov}
G_\om=\left\{Z(\om);\,Z \in \maG\right\},\,\mbox{ a.s.,}
\end{equation} 
and let $Z \in \maG$ and $Y=\varphi(Z)\circ\theta^{-1}$. 
Fix $n \ge p$ and $\om \in \theta^{-n}\maB$. Then, $\theta^{-1}\om \in \theta^{-(n+1)}\maB$, so 
\begin{align*}
X_{Y,n}(\om) & =\varphi_{\theta^{n-1}\om}\circ ... \circ \varphi_{\theta\om}\circ\varphi_{\om}\left(\varphi_{\theta^{-1}\om}\left(Z\left(\theta^{-1}\om\right)\right)\right)\\
& = \varphi_{\theta^{n}(\theta^{-1}\om)}\circ ... \circ \varphi_{\theta(\theta^{-1}\om)}\circ\varphi_{\theta^{-1}\om}\left(Z\left(\theta^{-1}\om\right)\right)\\
& = X_{Z,n+1}\left(\theta^{-1}\om\right)\\
& \in B_{\theta^{n+1}\left(\theta^{-1}\om\right)}=B_{\theta^{-n}\om}. 
\end{align*}
This is true on $\theta^{-n}\maB$ for all $n\ge p$, so the r.v. $Y\in \maG$. Hence, 
\begin{equation*}
\pr{\theta\left\{\om;\,\varphi_\om\left(G_\om\right) \subset G_{\theta\om}\right\}}=1,
\end{equation*}
which amounts to (\ref{eq:hypoG}). 

It remains to check (\ref{eq:hypoH}). Let $\om \in \maB$ and $x_\om \in G_{\theta^{-n}\om}$. 
This means that for some r.v. $Z \in \maG$, $x_\om=Z\left(\theta^{-n}\om\right).$ Hence, for all $n\ge p$, as 
$\theta^{-n}\om \in \theta^{-n}\maB$, we have that 
$$
\Phi^n_\om\left(x_\om\right)=X_{Z,n}\left(\theta^{-n}\om\right)\in B_{\theta^n\left(\theta^{-n}\om\right)}=B_\om.
$$
Therefore, on $\maB$, $H \subset H^n \subset B.$ As in the proof of Corollary \ref{cor:coupling} this implies, first, that $H$ is finite on $\maB$ 
and second, that $H$ is non-empty on $\maB$ since it coincides with $H^n$ after a certain rank. 
Hence (\ref{eq:hypoH}) holds since $\maB$ is assumed to have a positive probability. 

Finally, on the event $\{\mbox{Card }B=1\}$, $H=H^n=B$ for all $n\ge p$, so $c=1$. Whenever this event is of positive probability, 
the latter is true a.s., so $c=1$ and we can set $H_\om=\{X(\om)\}$, a.s.. Once again, the strong backwards coupling to $X$ follows 
from Corollary \ref{cor:coupling}.  
\end{itemize}
\end{proof}

Whenever (H1) holds together with (H3), (iv) provides an alternative proof 
of Proposition \ref{pro:anan}.  In fact, by $\tilde\theta$-invariance of $\qp$, the sequence 
of probability $\suiten{\qp\circ\tilde\theta^{-n}}$ is tight since it is constant. So replacing $\pp\otimes\delta_{0_{E}}$ by $\qp$ (which 
has $\Omega$-marginal $\mathbf P$ - see (\ref{eq:marginalP}))
in the proof of Proposition \ref{pro:anan} would lead to the same extension $\left(\tilde \Omega, \tilde{\F}, \qp, \tilde\theta\right)$.

\subsection{On Loynes's Theorem}
\label{subsec:loynes}
Our construction allows us to capture Loynes' celebrated Theorem for monotonic recursions (\cite{BacBre02,Loynes62}). 
We assume here that $\varphi$ is a.s. non-decreasing and continuous on $E$. Loynes's sequence is classically defined as 
$\suiten{\Phi^n\left(0_E\right)}$. 
It is routine to check that the latter is a.s. nondecreasing. Let a.s., $Y$, its supremum, that we assume to be $E$-valued. 
Then, by continuity, 
\begin{equation}
\label{eq:recurY2}
\varphi(Y)=Y\circ\theta\mbox{ a.s..}
\end{equation}
%
We set  
$$G=\llbracket 0,Y \rrbracket\,\mbox{ a.s..}$$ 
Let $n\ge 1$. From (\ref{eq:recurY2}), we have that 
$$\Phi^n_\om\left(Y\left(\theta^{-n}\om\right)\right)=Y(\om)\mbox{ a.s.}.$$ 
Therefore, since $\Phi^n$ is a.s. non-decreasing (as easily seen by induction), 
\begin{align*}
H^n_\om &=\left\{\Phi^n_\om(x);\, x \in \left\llbracket 0_E ;\, Y\left(\theta^{-n}\om\right) \right\rrbracket \right\}\\
&= \big\llbracket \Phi^n_\om\left(0_E\right);\,\Phi^n_\om\left(Y\left(\theta^{-n}\om\right)\right)\big\rrbracket\\
&= \left\llbracket \Phi^n_\om\left(0_E\right);\,Y(\om)\right\rrbracket\mbox{ a.s..}
\end{align*}
As $Y$ is the a.s. limit of Loynes's sequence, it readily follows that 
$$H=\left\{Y\right\}\mbox{ a.s..}$$
Using Corollary \ref{cor:resol}, we obtain that the only solution $Z$ to (\ref{eq:recurX}) on 
the original space such that $\pr{Z \le Y}>0$ is the the r.v. $Y$ itself. Thus $Y$ is the a.s. minimal solution, which 
is the exact statement of Loynes's Theorem.

\subsection{Renovating events}
\label{subsec:Borovkov}
Condition (v) of Proposition \ref{pro:appli} can be rephrased in the following comprehensive terms: 
whatever the initial r.v. $X_0=X$ in a given collection, after a deterministic rank $N$, the recursion is valued with positive probability 
in a finite range depending only upon the sample.  We will give in Section \ref{sec:loss} a concrete application of this result, which is, clearly, a generalization of the concept of \emph{Renovating events} 
(see \cite{Foss92} and \cite{BacBre02}, p.115). In fact, we have expressed condition (v) in the form that better emphasizes this connexion. This 
will allow us to show readily that the typical existence and coupling result of Renovating events theory (Corollary 2.5.1 in \cite{BacBre02}) is in fact a particular case of (v) of Proposition \ref{pro:appli}.  

Let us briefly recall 
that a stationary sequence of events $\suiten{\theta^{-n}\maA}$ 
(where $\maA$ is of positive probability) is termed sequence of renovating events of length $m \in \N^*$ for the 
recursion $\suite{X}$ whenever for some $E^{\prime}$-valued r.v. $\beta$ (where $E^{\prime}$ is some auxilliary space), some deterministic mapping 
$\Psi:\left(E^{\prime}\right)^m\rightarrow E$, for all $n\ge m$,  
\begin{equation}
\label{eq:borovkov}
X_n=\Psi\left(\beta\circ\theta^{n-m},...,\beta\circ\theta^{n-2},\beta\circ\theta^{n-1}\right)\,\mbox{ on $\theta^{-(n-m)}\maA$.}
\end{equation}
Now let $\mathcal Z$ a collection of r.v.'s, for which we assume that all sequences $\suiten{X_{Z,n}}$, $Z\in \mathcal Z$, admit the same sequence 
of renovating events 
$\suiten{\theta^{-n}\maA}$, with the same length $m$ and same function $\Psi$. 
It is then straightforward that (v) holds. Take indeed $\maG:=\mathcal Z$, $\maB:=\theta^m\maA$ and $p:=m$. Then, 
for all $n \ge p$, $\theta^{-n}\maB=\theta^{-(n-m)}\maA$, so on this event,  
$$X_{Z,n}(\om)=\Psi\left(\beta\circ\theta^{n-m},...,\beta\circ\theta^{n-2},\beta\circ\theta^{n-1}\right)\mbox{ for all }Z\in \mathcal Z.$$ 
Therefore, condition (v) is satisfied when taking 
$$B=\left\{\Psi\left(\beta\circ\theta^{-m},...,\beta\circ\theta^{-2},\beta\circ\theta^{-1}\right)\right\},\,\mbox{ a.s..}$$
In particular, $c=1$, so there is a unique solution to (\ref{eq:recurX}) on the original probability space, to which all sequences 
$\suiten{X_{Z,n}}$, $Z\in \mathcal Z$ converge with strong backwards coupling. This is Borovkov and Foss's Theorem (see 
\cite{Foss92} and \cite{BacBre02}, Corollary 2.5.1).

\section{The Loss Queue}
\label{sec:loss}
The classical, but challenging problem of finding the stability region of the \emph{Loss Queue} G/G/1/1 can be addressed in our 
framework. 
Consider a queueing system having one server and no waiting room, so that each customer is either immediately served (if the system is empty), or 
rejected upon arrival (if the server is busy). We assume that the input in this queue is of the \emph{G/G} type, and work   
on the Palm space $(\Omega,\maF,\pp,\theta)$ of  the arrival process $$... < T_{-2} < T_{-1} < T_0=0 < T_1 <...\,,$$
where $T_n$ is the arrival time of the $n$th customer, denoted $C_n$. 
The stationary sequence of inter-arrival times $\suitez{\xi}:=\suiten{T_{n+1}-T_n}$ is then compatible with $\theta$, i.e. $\xi_n=\xi\circ\theta^n$ 
for all $n$. The service times $\suitez{\sigma}$ requested by the customers form a sequence of marks of the arrival process, which is hence as well compatible with $\theta$ (see \emph{e.g.} \cite{BacBre02} for the Ergodic-theoretical representation of stationary queueing systems). 
We denote $\sigma$ the generic service time, and assume that $\sigma$ is a.s. non-negative, and $\xi$ is a.s. positive.

Let $W_n$ be the workload (i.e. the quantity of work in the system, in time unit) seen by $C_n$ upon arrival. As easily checked, 
the workload sequence is a $\R+$-valued recursion driven by the random map
$$\varphi_\om(x)=\left[x+\sigma(\om)\car_{\{x=0\}}-\xi(\om)\right]^+,$$ 
which is not a.s. non-decreasing and monotonic. Despite its simplicity, this model can not be handled by Loynes's framework. As a matter of fact, 
it is quite simple to exhibit examples for which uniqueness, and even existence of a solution to (\ref{eq:recurX}) don't hold 
(see \cite{BacBre02}, p.121 - and the examples hereafter). 
The existence of a stationary workload defined on $\Omega\times\N$, and a constructive scheme are presented in \cite{Fli83,Neu83}, 
whereas the existence on $\Omega\times\R^+$ is proven in \cite{Anan97,Anan99} using the tightness approach, as developped in section \ref{sec:tightness}. Hereafter, we use Theorem \ref{thm:main} to construct explicitly this solution, and relate it to those of \cite{Fli83,Neu83}. 

First, denote a.s.
\begin{align}
A&=\left\{i>0\,;\,\sigma\circ\theta^{-i}-\sum_{j=1}^i\xi\circ\theta^{-j}>0\right\};\label{eq:defAloss}\\
\gamma &=\sup A.
\end{align}
The set $A$ thus contains all the absolute values of indexes of the customers possibly in the system at time 0, which are those who found an empty system upon arrival, and did not complete their service at 0. 

Remark, that 
\begin{lemma}
\label{lemma:majorerho}
The r.v. $\gamma$ is a.s. finite. In particular, there exists an integer $g$ such that
\begin{align}
g&=\min\left\{ n>0 ; \pr{\gamma \le n}>0\right\}\label{eq:majorerho}.
\end{align}
\end{lemma}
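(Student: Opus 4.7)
The plan is to deduce the almost-sure finiteness of $A$, hence of $\gamma=\sup A$, from Birkhoff's ergodic theorem applied separately to the numerator and denominator appearing in the defining inequality \refeq{eq:defAloss}, and then to extract $g$ by an elementary continuity-of-measure argument.

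For the finiteness of $A$, I would compare the two quantities $\sigma\circ\theta^{-i}$ and $S_i:=\sum_{j=1}^i\xi\circ\theta^{-j}$. Since $(\Omega,\F,\pp,\theta)$ is ergodic and $\xi>0$ a.s., Birkhoff's theorem applied to the nonnegative r.v.\ $\xi$ yields a.s.
$$\frac{S_i}{i}\underset{i\to\infty}{\longrightarrow}\mathbf{E}[\xi]\in(0,+\infty],$$
so in particular $S_i\to\infty$ a.s. For the numerator, relying on the standing integrability assumption $\mathbf{E}[\sigma]<\infty$ that is natural in this $G/G/1/1$ setting, the stationarity of $\{\sigma\circ\theta^{-i}\}$ together with the elementary tail bound $\sum_i\mathbf{P}(\sigma>\varepsilon i)\le\mathbf{E}[\sigma]/\varepsilon<\infty$ and the upper half of Borel-Cantelli (which requires no independence) give $\sigma\circ\theta^{-i}/i\to 0$ a.s. Fixing any $\varepsilon\in(0,\mathbf{E}[\xi]/2)$, the two asymptotics jointly produce a.s.\ a random integer $N$ such that $\sigma\circ\theta^{-i}<\varepsilon i<S_i$ for every $i\ge N$, so $A\subseteq\{1,\dots,N-1\}$ is finite and $\gamma<\infty$ a.s.

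The existence of $g$ is then immediate: the events $\{\gamma\le n\}$ are nondecreasing in $n$ with union $\{\gamma<\infty\}$, which has full probability; by continuity of measure $\mathbf{P}(\gamma\le n)\nearrow 1$, so the set $\{n>0\,:\,\mathbf{P}(\gamma\le n)>0\}$ is non-empty and admits a minimum, which we take as $g$.

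The main subtlety is the implicit moment condition $\mathbf{E}[\sigma]<\infty$: without it the stationary sequence $\{\sigma\circ\theta^{-i}\}$ may very well fail to satisfy $\sigma\circ\theta^{-i}/i\to 0$ a.s.\ (heavy-tailed service times provide easy counterexamples), and the comparison above would break down. This integrability should therefore be stated as a standing hypothesis of the section, or treated as a hypothesis of the lemma; granting it, the argument above runs without further obstacle.
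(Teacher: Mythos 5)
Your proof is correct and follows essentially the same route as the paper: Birkhoff's theorem forces $\sigma\circ\theta^{-i}-\sum_{j=1}^i\xi\circ\theta^{-j}$ to become and stay non-positive, so $\gamma<\infty$ a.s., and $g$ then exists by an elementary positivity-of-measure argument. The only difference is one of explicitness: you spell out (via the Borel--Cantelli step) why $\sigma\circ\theta^{-i}/i\to 0$ a.s., and you rightly flag the integrability of $\sigma$ that the paper's one-line appeal to Birkhoff leaves implicit.
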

\begin{proof}
It is a consequence of Birkhoff's Theorem that $$\sigma\circ\theta^{-n}-\displaystyle\sum_{j=1}^n\xi\circ\theta^{-j} \tend -\infty\mbox{ a.s.,}$$  
so there exists a.s. $N<+\infty$ such that the latter expression is non-positive for all $i \ge N$. In particular, a.s. $\gamma \le N <+\infty$. 
For any $n$ such that $\pr{N=n}>0$ (such integers exists since $N < +\infty$ a.s.), $\pr{\gamma \le n}>0$, so $g$ is well-defined. 
\end{proof}

In view of the above remark, on the event $\{\gamma \le g\}$ the workload at time 0 is an element of the set  
$$B:=\left\{ \sigma\circ\theta^{-i}-\sum_{j=1}^i\xi\circ\theta^{-j}\,;\,i=1,...,g\right\}.$$
In other words, for any $E$-valued r.v $Z$ and for all $n\ge g$, $\Phi^n\left(Z\circ\theta^{-n}\right) \in B$ on $\{\gamma \le g\}$, that is to say 
$$W_{Z,n}\in B\circ\theta^n\,\mbox{ on }\theta^{-n}\{\gamma \le g\}.$$
We are thus in the case (v) of Proposition \ref{pro:appli} taking $G:=\R+$ a.s., $p:=g$ and $\maB=\{\gamma \le g\}$. 
Theorem \ref{thm:main} thus applies to the workload sequence: there exists 
an extension $\left(\tilde \Omega, \tilde{\F}, \qp, \tilde\theta\right)$ on which (\ref{eq:recurX}) admits a solution. 

We aim to compare our extension to that presented in \cite{Fli83}. 
Let us briefly recall 
the construction proposed therein. 
Define almost surely, for all $i\in\N$, 
\[\ell_\om(i)=\left\{\begin{array}{ll}
i+1&\mbox{ if $C_{-i}$, provided he found an empty system upon arrival,}\\
   &\mbox{ is still in service at $T_0-$,}\\
0 &\mbox{ else,}
\end{array}\right.\] 
and for all $n\ge 1$, 
$$L^n_{\om}(i)=\ell_{\theta^{-1}\om}\circ\ell_{\theta^{-2}\om}\circ ...\circ\ell_{\theta^{-n}\om}(i).$$
In words, $L^n(i)$ represents the index of the customers present in the system at $T_0-$ when assuming that customer 
$C_{-n-i}$ found an empty system upon arrival. 
Denoting then $\hat H^n_\om=L^n_\om(\N)$ and $\hat H_\om=\bigcap_{n\ge 1}\hat H^n_\om$, one can show (see \cite{Fli83}), as in Lemma \ref{lemma:bijection}, that $\hat H$ is an a.s. finite subset of $\N$ having a deterministic cardinal. Hence, an enrichment   
 $\left(\hat \Omega, \hat{\F}, \hp, \hat\theta\right)$ exists, that is defined similarly to that in Proposition \ref{pro:extension}, 
replacing $\varphi$ by $\ell$ and $H$ by $\hat H$.  
Moreover, (\ref{eq:recurX}) is solved on this extension by setting 
$$\hat X(\om,i)=\left[\sigma\left(\theta^{-i}\om\right)-\sum_{j=1}^i\xi\left(\theta^{-j}\om\right)\right]^+,\hp-\mbox{a.s.},$$
$$\hat\varphi_{\om,i}=\varphi_\om,\hp-\mbox{a.s.}.$$  

As shown in the next Lemma, $\left(\hat \Omega, \hat{\F}, \hp, \hat\theta\right)$ can be projected onto the extension 
$\left(\tilde \Omega, \tilde{\F}, \qp, \tilde\theta\right)$ constructed by Proposition \ref{pro:extension}.    
\begin{lemma}
\label{lemma:compaFlipo}
The following mapping is a.s. surjective:
\[F_\om:\left\{\begin{array}{ll}
\hat H_\om &\longrightarrow H_\om\\
i &\longmapsto \Phi^i_\om(0),
\end{array}\right.\]
where $\Phi^0_\om(0)$ is naturally set to $0$. 
\end{lemma}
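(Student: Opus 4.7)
\emph{Plan.} The strategy is to exhibit, for each $x\in H_\om$, a preimage $i\in\hat H_\om$ with $F_\om(i)=\Phi^i_\om(0)=x$, using the specific structure of the loss queue to pin down the candidate index and the interpretation of $L^n$ in Flipo's construction to verify it belongs to $\hat H_\om$.

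First I identify $i$. For the loss queue the map $\varphi$ takes values in $\{0\}\cup\bigl\{\sigma(\theta^{-j}\om)-\sum_{k=1}^{j}\xi(\theta^{-k}\om):j\in A\bigr\}$, so in particular
\[
H_\om\subseteq\{0\}\cup\Bigl\{\sigma(\theta^{-j}\om)-\sum_{k=1}^{j}\xi(\theta^{-k}\om):j\in A\Bigr\}.
\]
If $x=0$ take $i=0$, using the stated convention $\Phi^0_\om(0)=0$; if $x>0$ take the generically unique $i\in A$ realising the equality. In this latter case, running the recursion from the empty state at $T_{-i}$ and invoking $i\in A$ shows that every intermediate customer $C_{-i+1},\ldots,C_{-1}$ arrives while $C_{-i}$ is still in service and is therefore rejected, whence $\Phi^i_\om(0)=x$.

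Second, I verify $i\in\hat H_\om=\bigcap_{n\ge 1}L^n_\om(\N)$, i.e.\ that $i\in L^n_\om(\N)$ for every $n\ge 1$. For $n\le i$, take $i'=i-n$: this corresponds to starting Flipo's chain already tracking the customer $C_{-i}$, and at each of the $n$ subsequent steps the increment criterion defining $\ell$ reduces to a partial sum inequality dominated by $i\in A$, so the chain increments throughout and $L^n_\om(i-n)=i$. For $n>i$, use the composition $L^n_\om(i')=L^i_\om\bigl(L^{n-i}_{\theta^{-i}\om}(i')\bigr)$: the outer factor maps $0$ to $i$ by the previous argument applied at viewpoint $\om$, so it remains to choose $i'$ so that $L^{n-i}_{\theta^{-i}\om}(i')=0$. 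The case $x=0$, $i=0$ is subsumed in the same reachability of state~$0$.

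The main obstacle is this last point: for any viewpoint $\om'\in\{\om,\theta^{-i}\om\}$ and any $m\ge 1$, one must produce $i'$ with $L^m_{\om'}(i')=0$. This is granted a.s.\ by Lemma~\ref{lemma:majorerho} combined with Birkhoff's theorem: for $i'$ large enough, $\sigma(\theta^{-m-i'}\om')\le\sum_{j=1}^{i'}\xi(\theta^{-m-j}\om')$, so the deepest application $\ell_{\theta^{-m}\om'}(i')$ already returns~$0$, and a further adjustment of $i'$ ensures that the chain terminates at~$0$ at the final viewpoint. Once this reachability is established, $i\in\bigcap_{n\ge 1}L^n_\om(\N)=\hat H_\om$ and $F_\om(i)=x$, which yields the claimed surjectivity almost surely.
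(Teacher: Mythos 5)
Your reduction is sound up to its last step: the case $n\le i$ (the chain started in state $i-n$ increments at every stage because $i\in A$), and the composition identity $L^n_\om=L^i_\om\circ L^{n-i}_{\theta^{-i}\om}$, are both correct, so everything hinges on producing, for every $m\ge 1$, an index $i'$ with $L^{m}_{\theta^{-i}\om}(i')=0$, i.e.\ on the system being empty just before $T_{-i}$ when started empty sufficiently far in the past. This is where there is a genuine gap. Lemma~\ref{lemma:majorerho} and Birkhoff's theorem only give that, for $i'$ large, the service of the deeply tracked customer expires before the viewpoint is reached, so the chain resets to $0$ at some \emph{deep} stage; from then on it follows the loss-queue dynamics, and nothing forces it to sit at $0$ at the final viewpoint. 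The phrase ``a further adjustment of $i'$ ensures that the chain terminates at $0$'' is exactly the missing argument, and as a statement about an arbitrary viewpoint it is false: in Example 1 of the paper, for the sample $\om_1$ there is \emph{always} a customer in service at time $0$ once the start is deep enough, so $\Phi^{d}_{\om_1}(0)\neq 0$ for all large $d$ and $0\notin L^m_{\om_1}(\N)$ for $m$ large. That the reachability claim does hold at the particular viewpoint $\theta^{-i}\om$ is precisely where the hypothesis $x\in H_\om$ must be used, and your argument never invokes it at this stage.

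The paper supplies the missing step differently: since $x\in H_\om$, for every $n$ and every $\tilde n\ge n$ there is a pre-image $x_{\tilde n}$ of $x$ at depth $\tilde n$ (bounded by $Y(\theta^{-\tilde n}\om)$); Birkhoff's theorem (with $\esp{\xi}>0$) rules out $x_{\tilde n}-\sum_{j=n+1}^{\tilde n}\xi\circ\theta^{-j}\ge 0$ for all $\tilde n\ge n$, and from the resulting sign change one extracts an arrival index $\hat n\in\{n,\dots,\tilde n\}$ at which that trajectory finds an empty system, whence $x=\Phi^{\hat n}_\om(0)$ with $\hat n\ge n$ arbitrary, i.e.\ $j=L^n_\om(\hat n-n)$ for every $n$. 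This trajectory-wise emptying argument, anchored in the existence of pre-images of $x$ at every depth, cannot be replaced by a reachability assertion made at the viewpoint alone. Two secondary points: your opening claim that $\varphi$ takes values in $\{0\}\cup\bigl\{\sigma\circ\theta^{-j}-\sum_{k=1}^{j}\xi\circ\theta^{-k}:j\in A\bigr\}$ is false as stated ($\varphi_\om(x)=[x-\xi(\om)]^+$ for $x>0$), and the inclusion you actually need for $H_\om$ is itself a consequence of the same emptying argument, so it cannot serve as a free starting point; and you do not verify that $F_\om$ maps $\hat H_\om$ into $H_\om$, which the paper establishes first (via $\Phi^{n+i_n}_\om(0)=\Phi^{j}_\om(0)$ for all $n$) before proving surjectivity.
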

\begin{proof}
Fix a sample $\om$, and let us first check that $F_\om$ maps $\hat H_\om$ onto $H_\om$. 
Let $j\in \hat H_\om$. For all $n\ge 1$, there exists $i_n\in \N$ such that $j=L_\om^n(i_n)$. In other words, for the sample $\om$,  
$C_{-j}$ is in service just before time $T_0$ whenever $C_{n+i_n}$ entered an empty system, hence  
$$\Phi^{n+i_n}_\om(0)=\Phi_\om^j(0)=F_\om(j).$$ 
Therefore, $F_\om(j)\in\Phi^{n+i_n}_\om(\R+)$, 
so there exists $n^\prime=n+i_n\ge n$ such that $F_\om(j)\in H^{n^\prime}_\om$. This is true for all $n\ge 1$, 
hence $F_\om(j)\in H_\om$.  

Now, to check that $F_\om$ is surjective, take $x\in H_\om$ and let for all $n\ge 1$, 
$x_n\in \left[0,Y\left(\theta^{-n}\om\right)\right]$ be such 
that $x=\Phi^n_\om\left(x_n\right)$. 
First, as shown above, there exists $j\in \{0,1,...,\gamma\}$ such that $$x=\Phi_\om^j(0)=F_\om(j).$$ 
Fix now $n\ge 1$. Then, assuming that for all $\tilde n\ge n$, 
$$x_{\tilde n}(\om)-\sum_{j=n+1}^{\tilde n}\xi\left(\theta^{-j}\om\right) \ge 0$$ 
would 
contradict Birkhoff's Theorem (remember that $\esp{\xi}>0$). Then, there exists 
$\tilde n \ge n$ such that $x_{\tilde n}(\om)-\sum_{j=n+1}^{\tilde n}\xi\left(\theta^{-j}\om\right) <0,$ which means that either (i) 
$x_{\tilde n}=0$ and the system was empty upon the arrival of $C_{-\tilde n}$ or (ii) $C_{-\tilde n}$ found a busy server 
upon arrival, having a residual workload of $x_{\tilde n}$, and the customer in service at that instant has left the system 
before the arrival of $C_{-n}$. In both cases, whenever $C_{-\tilde n}$ found a workload equal to $x_{\tilde n}$ upon 
arrival, there exists an index $\hat n \in \left\{n,n+1,...,\tilde n\right\}$ such that the system is empty 
at the arrival of $C_{-\hat n}$. In other words, $\Phi^{\tilde n}_\om\left(x_{\tilde n}\right))=\Phi^{\hat n}_{\om}(0)$.  

As a consequence, there exists a non negative integer $i_n:=\hat n-n$ such that 
$$\Phi_\om^j(0)=x=\Phi^{\tilde n}_\om\left(x_{\tilde n}\right)=\Phi^{n+i_n}_{\om}(0),$$ 
which amounts to say that $j=L_\om^n(i_n)$. This is true for all 
$n\ge 1$, hence $j \in \hat H_\om$, which concludes the proof.  
\end{proof}

We now introduce two simple examples (given in \cite{BacBre02}, p.122), in which existence or uniqueness of a stationary workload don't hold on the original probability space, and address the stability problem in our framework. 
We work on the following elementary ergodic dynamical system:
\[\left\{\begin{array}{ll}
\Omega &=\left\{\om_1,\om_2\right\};\\
\maF &=\mathcal P(\Omega);\\
\mathbf P &:= \mbox{ uniform on }\Omega;\\
\theta &:\om_1 \longleftrightarrow  \om_2.
\end{array}\right.\]

\noindent\textbf{Example 1}\\
\noindent
Set, say, 
\[\left\{\begin{array}{ll}
\xi (\om_1) &=\xi(\om_2)=1;\\
\sigma(\om_1) &=1,\,\sigma (\om_2)=:y>2.
\end{array}\right.\] 
We will only treat in detail the case where $y \not\in \N$ and $\lfloor y \rfloor$ is an odd number. The other cases are analogous. 
Then, readily 
\begin{align*}
A_{\om_1}&=\left\{1,3,5,...,\lfloor y \rfloor\right\},\,\gamma(\om_1)=\lfloor y \rfloor;\\
A_{\om_2}&=\left\{2,4,6,...,\lfloor y \rfloor-1\right\},\,\gamma(\om_2)=\lfloor y \rfloor -1.
\end{align*} 
Let $i \in A_{\om_1}$ and fix $n\ge 1$. Then, it is always possible to find an $x\in \R+$ such that when assuming that the recursion 
equals $x$ at the arrival of customer $C_{-n}$, $C_{-i}$ is in service at time 0. Indeed,
\begin{itemize}
\item If $n$ is odd, 
\begin{itemize}
      \item If $n \equiv i  \mbox{ mod.}\left(\lfloor y \rfloor +1\right)$, say $n=i-2 + p\left(\lfloor y \rfloor +1\right)$, set 
      $$W_{-n}:=x=0.$$ Then $C_{-n}$ is served, and 
      \begin{multline*}
            W_{-\left(i+(p-1)\left(\lfloor y \rfloor +1\right)+1\right)}\\
            \begin{aligned}
               &=y -\Bigl\{\left(i+p\left(\lfloor y \rfloor +1 \right)\right) 
                 - \left(i+(p-1)\left(\lfloor y \rfloor +1 \right)+1\right)\Bigl\}\\
               &=y-\lfloor y \rfloor>0, 
            \end{aligned}   
            \end{multline*}
      whereas 
      $$W_{-\left(i+(p-1)\left(\lfloor y \rfloor +1\right)\right)}=\left[y-\left\{\lfloor y \rfloor +1\right\}\right]^+ = 0.$$      
      Therefore, $C_{-\left(i+(p-1)\left(\lfloor y \rfloor +1\right)\right)}$ is served and by an immediate induction, all the customers $C_{-k}$,    where $k \in \{i,....,n-1\}$ and $k\equiv i \mbox{ mod.}\left(\lfloor y \rfloor +1\right)$ are served. 
      In particular, $C_{-i}$ 
      is served, and is still in the system at 0. So 
      $$\Phi^n_{\om_1}(x)= \sigma\circ\theta^{-i}(\om_1)-\sum_{j=1}^i \xi\circ\theta^{-j}(\om_1)=y-i;$$
      \item If $n \equiv i-2\ell\,  \mbox{ mod.}\left(\lfloor y \rfloor +1\right)$ (say $n=i-2\ell + p\left(\lfloor y \rfloor +1\right)$), set 
      $$W_{-n}:=x \in \Bigl(\lfloor y \rfloor -2\ell,\lfloor y \rfloor -2\ell +1\Bigl].$$ 
      Then, \begin{multline*}
            W_{-\left(i+(p-1)\left(\lfloor y \rfloor +1\right)+1\right)}\\
            \begin{aligned}
               &=x-\Bigl\{\left(i-2\ell+p\left(\lfloor y \rfloor +1 \right)\right) - \left(i+(p-1)\left(\lfloor y \rfloor +1 \right)+1\right)\Bigl\}\\
               &=x-\left\{\lfloor y \rfloor -2\ell\right\} >0,
            \end{aligned}   
            \end{multline*}
      whereas 
      $$W_{-\left(i+(p-1)\left(\lfloor y \rfloor +1\right)\right)}=\left[x-\left\{\lfloor y \rfloor -2\ell +1\right\}\right]^+= 0,$$
      so $C_{-\left(i+(p-1)\left(\lfloor y \rfloor +1\right)\right)}$ is served, and as above, all $C_{-k}$ where\\ 
      $k\in \left\{i,....,\left(i+(p-1)\left(\lfloor y \rfloor +1\right)\right)\right\}$ and $k\equiv i \mbox{ mod.}\left(\lfloor y \rfloor +1\right)$ 
      are served. Here again, $C_{-i}$ is thus served and $\Phi^n_{\om_1}(x)=y-i.$ 
\end{itemize}
\item if $n$ is even, set $W_{-n}=x+1$ for the different values of $x$ set above, so that $W_{-(n-1)}=x$ for $n-1$ odd and we can apply the above 
argument.
\end{itemize}
Therefore, in any case, for all $i \in A_{\om_1}$ and all $n$, there exists $x\in\R+$ such that 
$\Phi^n_{\om_1}(x)=y-i$. This shows that $y-i \in H_{\om_1}$ for all such $i$. On the other hand, provided some customer $C_{-n}$ is served, where $n$ is even and $n> \lfloor y \rfloor$, the service time of $C_{-n}$ is 1, so $C_{-(n-1)}$ is served as well. Then $n-1$ is odd with 
$n \equiv i-2\,  \mbox{ mod.}\left(\lfloor y \rfloor +1\right)$ for some $i \in A_{\om_1}$, so as above $C_{-i}$ is in service at 0. 
In particular, there is \emph{always} a customer in service at time 0. 

Therefore, for all $n\ge \lfloor y \rfloor +1$, 
$$\left\{y-i\,;\, i \in A_{\om_1}\right\} \subset H_{\om_1} \subset \Phi^n_{\om_1}\left(\R+\right) \subset \left\{y-i\,;\, i \in A_{\om_1}\right\},$$
hence
$$H_{\om_1}=\left\{y-1,y-3,y-5,...,y-\lfloor y \rfloor\right\}.$$
Analogously, we can check that
$$H_{\om_2}=\left\{y-2,y-4,y-6,...,y-\left(\lfloor y \rfloor -1\right),0\right\}$$
(indeed the system may be empty at 0 for the sample $\om_2$ whenever $C_{-\lfloor y \rfloor +1}$ is served). In particular, 
$c=\frac{\lfloor y\rfloor +1}{2}.$ 

Now notice that $\varphi_{\om_1}(y-i)=\left[y-(i+1)\right]^+$ for all odd $i$ such that $i \le \lfloor y \rfloor$, whereas  
$\varphi_{\om_2}\left(y-(i+1)\right)=y-(i+2)$ for all odd $i$, $i<\lfloor y \rfloor$ and $\varphi_{\om_2}(0)=y-1$. As a conclusion, recalling 
(\ref{eq:invariant0}) we easily check that the invariant sigma-field of $\left(\tilde \Omega, \tilde{\F}, \qp, \tilde\theta\right)$ is 
$\left\{\emptyset,\tilde\Omega\right\}$. In particular, the set $\mathcal K$ of Lemma \ref{lemma:isomorphe} is empty: there is no solution on the original probability space.  
\textit{ }\\\\\\
\noindent\textbf{Example 2}\\
\noindent
On the same probability space, define now 
\[\left\{\begin{array}{ll}
\xi (\om_1) &=\xi(\om_2)=1;\\
\sigma(\om_1) &=:x,\,\sigma (\om_2)=:y,
\end{array}\right.\] 
where $x$ and $y$ both belong to the open interval $(1,2)$. 
Following the same lines as in Example 1, it is easily seen that 
$$A_{\om_1}=\{1\}\,;\,A_{\om_2}=\{1\},$$
and 
$$H_{\om_1}=\{0,y-1\}\,;\,H_{\om_2}=\{0,x-1\}.$$
It is then immediate that both events 
\begin{align*}
\mathcal I &=\left\{(\om_1,0)\,;\,(\om_2,x-1)\right\},\\
\mathcal I' &=\left\{(\om_1,y-1)\,;\,(\om_2,0)\right\}\\
\end{align*}
belong to $\mathcal K$ in this case. There are two solutions to (\ref{eq:recurX}).

\section{The Queue with impatient customers}
\label{sec:impatient} 
We now consider a queueing model with impatient customers. We use the same notation and assumptions as in section \ref{sec:loss},   
except that customer $C_n$ now requires to enter service before a given deadline, say at $T_n+D_n$. If not, the customer leaves the system 
at $T_n+D_n$ and is lost forever. 
We consider, that as soon as a customer entered the service booth, his service will proceed without interruption even though 
his patience elapses during his service. We assume that $\suitez{D}$ is a sequence of marks of the arrival process, and that 
the generic r.v. $D$ is non-negative. 
 The system has  
a single server, operating in the order of arrivals (FIFO). Then (see \cite{MR86d:60103,HebBac81,Moy09}), the workload sequence 
$\suitez{X}$ is stochastically recursive, driven by the mapping 
$$\varphi_\om(x)=\left[x+\sigma(\om)\car_{\{x\le D(\om)\}}-\xi(\om)\right]^+,$$
since each given customer is proposed a waiting time before entering service, that equals the workload just before his arrival time. 
Hence, the loss queue is a particular case of this model for $D=0$ a.s., as easily understood. 
We aim once again to solve 
\begin{equation}
\label{eq:recurIMP}
Y\circ\theta=\varphi\left(Y\right),\mbox{ a.s.}.
\end{equation} 

As can easily be checked, we have a.s. for all $x$, 
\begin{equation}
\label{eq:compphi}
\chi_{\om}(x) \le \varphi_\om(x) \le \psi_{\om}(x),
\end{equation}
where 
\begin{align*}
\chi(x)&=\left[x\vee \left(\sigma \wedge D\right) -\xi\right]^+,\\
\psi(x)&=\left[x\vee \left(\sigma + D\right) - \xi\right]^+
\end{align*}
(see eq. (9) and (12) in \cite{Moy09}). 

The random maps $\chi$ and $\psi$ are a.s. continuous and non-decreasing, and 
the only proper solutions $Y$ and $Z$ to the recursions respectvely driven by $\chi$ and $\psi$ read  
\begin{align}
Y&=\left[\max_{i\ge 1}\left(\left(\sigma\wedge D\right)\circ\theta^{-i}-\sum_{j=1}^i \xi\circ\theta^{-j}\right)\right]^+,\label{eq:defY}\\
Z&=\left[\max_{i\ge 1}\left(\left(\sigma+D\right)\circ\theta^{-i}-\sum_{j=1}^i \xi\circ\theta^{-j}\right)\right]^+.\label{eq:defZ}
\end{align}
Then, if $\pr{Z=0}>0$, we are in the configuration of (iii) of Proposition \ref{pro:appli}, and a unique 
solution exists on the original probability space.  

If $\pr{Z=0}=0$, a construction based on tightness 
arguments is proposed in \cite{Moy09}, that establishes the existence of a stationary workload on $\Omega \times \R+$ provided 
that $\sigma$ and $\xi$ both take value in a set of the form 
\begin{equation}
\label{eq:defLalpha}
L_\alpha:=\left\{n\alpha;\,n\in \N\right\},\mbox{ where }\alpha\in \R+.
\end{equation}
In fact, Theorem \ref{thm:main} applies, and we can explicitly construct the extension in this case. 
Indeed, 
clearly $\varphi(x)\in L_\alpha$ a.s. for all $x\in L_\alpha$, so that 
the recursion, when initiated in $L_{\alpha}$, remains in this set forever. We are thus in the case (iv) of Proposition 
\ref{pro:appli} taking $L=L_{\alpha}$. More precisely,  
as in (\ref{eq:comeback}) and in view of 
(\ref{eq:compphi}), we have a.s. that for any 
$Y \le x \le Z$,  
$$Y\circ\theta = \chi(Y) \le \chi(x) \le \varphi(x) \le \psi(x) \le \psi(Z) = Z\circ\theta. $$ 
Hence, the random set defined by
\begin{equation}
\label{eq:defGIMP}
G = L_\alpha \cap \left[Y, Z\right]\,\mbox{ a.s.} 
\end{equation}
satisfies to (\ref{eq:hypoG}). Moreover, $Z$ is a.s. finite (see Lemma \ref{lemma:majoretau} below), thus  
$G$ is a.s. of finite cardinal. So does $H$: (\ref{eq:hypoH}) holds true and Theorem \ref{thm:main} applies. 

\subsubsection*{Size of the extension}
Let us investigate more precisely the form of the extension. First, denote   
\begin{equation}
\label{eq:defs}
\underaccent{\bar}{s} = \min\left\{n \in \N;\,\pr{\sigma \le n\alpha}>0\right\},
\end{equation}
\begin{equation}
\label{eq:defS}
\bar s = \inf\left\{n \in \N;\,\pr{\sigma \le n\alpha}=1\right\},
\end{equation}
\begin{equation}
\label{eq:defD}
\bar d = \inf\left\{n \in \N;\,\pr{D \le n\alpha}=1\right\},
\end{equation}
where $\bar s$ and $\bar d$ may be set to $+\infty$. 
Denote, a.s., 
\begin{align}
A&=\left\{i>0\,;\,\left(D\circ\theta^{-i}\right)-\sum_{j=1}^i\xi\circ\theta^{-j}>0\right\}\mbox{ and }\tau^-=\sup A;\nonumber\\
\tau^+&=\min\left\{i>0\,;\, \sum_{j=0}^{i-1} \xi\circ\theta^{j} \ge D\right\};\nonumber\\
B&=\left\{i>0\,;\,\left(\sigma\circ\theta^{-i}+D\circ\theta^{-i}\right)-\sum_{j=1}^i\xi\circ\theta^{-j}>0\right\}\mbox{ and }\rho=\sup B\label{eq:defrho}.
\end{align}
Each customer spends in the waiting line (resp. in the system: waiting line + service booth) a time at most equal to his/her patience time (resp. his/her service time plus his/her whole patience time). The set $A$ (resp. $B$) thus contains all the absolute values of the indexes of the customers possibly in the waiting line (resp. in the total system) at time 0. 
Finally, $\tau^+$ counts the number of arrivals customer 0 can see during his/her patience time. 

Similarly to Lemma \ref{lemma:majorerho}, 
\begin{lemma}
\label{lemma:majoretau}
The r.v.'s $\rho$, $\tau^-$ and $\tau^+$ are a.s. finite. In particular, there exist two integer $p$ and $t$ such that
\begin{align}
p&=\min\left\{ n>0 ; \pr{\rho \le n}>0\right\};\label{eq:defp}\\
t&=\min\left\{ n>0 ; \pr{\tau^- \le n}>0\right\}\label{eq:deft-}.
\end{align}
\end{lemma}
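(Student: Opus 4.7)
The plan is to mimic closely the proof of Lemma \ref{lemma:majorerho}, which already treats the analogous question for $\gamma$, and reduces everything to Birkhoff's ergodic theorem under the standing assumptions that $\xi$ is a.s.\ positive (hence $\mathbf{E}\xi > 0$) and $\sigma$, $D$ are a.s.\ finite non-negative r.v.'s.

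I would treat the three r.v.'s one after the other. For $\rho$, the key observation is that if $i \in B$, then
\begin{equation*}
(\sigma+D)\circ\theta^{-i} > \sum_{j=1}^{i}\xi\circ\theta^{-j}.
\end{equation*}
By Birkhoff's Theorem applied to the stationary ergodic sequence $\{\xi\circ\theta^{-j}\}$, the right-hand side tends a.s.\ to $+\infty$ (at linear rate $\mathbf{E}\xi > 0$). On the other hand, writing $(\sigma+D)\circ\theta^{-i}$ as the difference of two consecutive Birkhoff sums of $\sigma+D$ divided by $i$, we get $(\sigma+D)\circ\theta^{-i}/i \to 0$ a.s., so the left-hand side is $o(i)$ almost surely. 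Therefore the inequality can hold only for finitely many $i$, and $\rho < \infty$ a.s.. The very same argument, substituting $D$ for $\sigma + D$, shows that $\tau^- < \infty$ a.s.

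For $\tau^+$, I would observe that by Birkhoff's Theorem applied forwards, $\sum_{j=0}^{i-1}\xi\circ\theta^j \to +\infty$ a.s. as $i \to \infty$, while $D$ is a.s.\ finite. Therefore the smallest $i$ achieving $\sum_{j=0}^{i-1}\xi\circ\theta^j \ge D$ exists a.s., which gives $\tau^+ < \infty$ a.s.

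The definitions of $p$ and $t$ then follow at once: since $\rho$ is a.s.\ finite, there exists an integer $n$ with $\mathbf{P}(\rho \le n) > 0$ (e.g.\ taking $n$ large enough that $\mathbf{P}(\rho \le n) > 1/2$), so the minimum in \eqref{eq:defp} is over a non-empty subset of $\mathbf{N}^*$ and is well-defined; the same reasoning yields $t$ from the a.s.\ finiteness of $\tau^-$. I do not expect any real obstacle here; the one minor point worth stating explicitly is the pointwise convergence $(\sigma+D)\circ\theta^{-i}/i \to 0$ a.s., which is a standard consequence of Birkhoff applied to the integrable r.v.\ $\sigma + D$ (under the tacit integrability assumption already used for $\sigma$ in Lemma \ref{lemma:majorerho}), and which is truly the only analytic ingredient beyond what was done previously.
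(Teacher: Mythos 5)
Your proof is correct and follows essentially the same route as the paper, which gives no separate argument for this lemma beyond invoking the same Birkhoff-type reasoning as in Lemma \ref{lemma:majorerho}: the partial sums of $\xi$ drift to $+\infty$ at linear rate while the terms $(\sigma+D)\circ\theta^{-i}$, $D\circ\theta^{-i}$ are $o(i)$, so $B$, $A$ are a.s.\ finite and $\tau^+$ is finite since $D$ is, after which $p$ and $t$ are well-defined exactly as $g$ was. Your explicit remark about the tacit integrability needed for $(\sigma+D)\circ\theta^{-i}/i\to 0$ is a fair point that the paper leaves implicit, but it does not change the argument.
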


\noindent
As $H \subseteq G$ a.s., we have that 
\begin{equation}
\label{eq:battery}
c \le \mbox{Card }\left(L_\alpha \cap \left[Y; Z\right]\right)=\mbox{Card }\left(\N \cap \left[0,\frac{Z-Y}{\alpha}\right]\right)=\left\lceil\frac{Z-Y}{\alpha}\right\rceil\,\mbox{ a.s..}
\end{equation}
Now set a.s.
\begin{align*}
i_0&=\mbox{argmax}\left\{\sigma\circ\theta^{-i}+D\circ\theta^{-i}-\sum_{j=1}^i \xi\circ\theta^{-j}\,;\,i\in \N^*\right\}\\
&=\mbox{argmax}\left\{\sigma\circ\theta^{-i}+D\circ\theta^{-i}-\sum_{j=1}^i \xi\circ\theta^{-j}\,;\,i=1,..,\rho\right\}.
\end{align*}
Then, 
\begin{align*}
Z-Y\le &\,\sigma\circ\theta^{-i_0}+D\circ\theta^{-i_0}-\sum_{j=1}^{i_0} \xi\circ\theta^{-j}\\
&\,\,\,\,\,\,\,\,\,\,\,\,-\left(\left(\sigma\circ\theta^{-i_0}\right)\wedge \left(D\circ\theta^{-i_0}\right)-\sum_{j=1}^{i_0} \xi\circ\theta^{-j}\right)\\
= &\,\,\left(\sigma\circ\theta^{-i_0}\right)\vee \left(D\circ\theta^{-i_0}\right)\,\mbox{ a.s.,}
\end{align*}
so that, with (\ref{eq:battery}),  
\begin{equation*}
c\le \left\lceil\frac{\max\biggl\{\left(\sigma\circ\theta^{-i}\right)\vee \left(D\circ\theta^{-i}\right)\,;\,i=1,...,\rho\biggl\}}{\alpha}\right\rceil\,\mbox{ a.s.}. 
\end{equation*}
Therefore, on $\{\rho \le p\}$, 
\begin{equation}
\label{eq:majorecard0}
c \le \left\lceil\frac{\underset{i=1,...,p}{\max}\biggl\{\left(\sigma\vee D\right)\circ\theta^{-i}\biggl\}}{\alpha}\right\rceil.
\end{equation}

On another hand, the largest possible workload at time 0 is less than the sum of the service time of the customer in service (whose index has absolute value in 
$B$) and the service times requested by the customers in the waiting line at 0 (their indexes have absolute values in $A$). 
Therefore, a.s., 
\begin{align}
H \subseteq H^n &\subseteq L_\alpha\,\bigcap\,\left(\bigcup_{i \in B} \left[0\,;\,\sigma\circ\theta^{-i}+\sum_{j=1}^{i-1} \left(\sigma\circ\theta^{-j}\right)\car_{A}\{j\}\right]\right)\label{eq:includeH}\\
&\subseteq L_\alpha\,\bigcap\,\left[0\,;\,\sigma\circ\theta^{-\rho}+\sum_{j=1}^{\rho - 1} \left(\sigma\circ\theta^{-j}\right)\car_{A}\{j\}\right]\nonumber\\
&\subseteq L_\alpha\,\bigcap\,\left[0\,;\,\sigma\circ\theta^{-\rho}+\sum_{j=1}^{\tau^-} \left(\sigma\circ\theta^{-j}\right)\car_{A}\{j\}\right]\nonumber\\
&= L_\alpha\,\bigcap\,\left[0;M\right],\nonumber 
\end{align}
where 
\begin{equation}
\label{eq:defM}
M:=\sigma\circ\theta^{-\rho}+\sum_{j=1}^{+\infty} \left(\sigma\circ\theta^{-j}\right)\car_{A}\{j\},
\end{equation}
and where we use the fact that $\tau^- \le \rho,$ a.s.. 
This implies that a.s.
\begin{equation}
\label{eq:cardgeneral}
c  \le 
\Bigl\lceil \frac{M}{\alpha}\Bigl\rceil = \frac{M}{\alpha}+1.
\end{equation}
On the event $\{\rho \le p\}$, we have $\tau^- \le p$, thus  
$
M\le \sum_{j=1}^{p} \left(\sigma\circ\theta^{-j}\right),
$
and with (\ref{eq:cardgeneral}), 
\begin{equation}
\label{eq:majorecard1}
c  \le \frac{\displaystyle\sum_{j=1}^{p} \left(\sigma\circ\theta^{-j}\right)}{\alpha}+1.
\end{equation}
Now, on $\{\tau^- \le t\}$, 
$$M \le \sigma\circ\theta^{-\rho}+\sum_{j=1}^{t} \left(\sigma\circ\theta^{-j}\right),$$
so with  (\ref{eq:cardgeneral}),
\begin{equation}
\label{eq:majorecard2}
\displaystyle c \le \frac{\underset{i=1,...,p}{\max}\sigma\circ\theta^{-i}+\sum_{j=1}^{t} \left(\sigma\circ\theta^{-j}\right)}{\alpha}+1.
\end{equation}
The upper bounds (\ref{eq:majorecard0}), (\ref{eq:majorecard1}) and (\ref{eq:majorecard2}) hold with positive probability, hence they are true a.s. since $c$ is deterministic. Therefore, 
\begin{multline*}
c  \le 1+\frac{1}{\alpha}.\min\Biggl\{\underset{i=1,...,p}{\max}\left(\sigma\circ\theta^{-i}\right)+\sum_{j=1}^{t} \left(\sigma\circ\theta^{-j}\right)\\;\sum_{j=1}^{p} \left(\sigma\circ\theta^{-j}\right);\underset{i=1,...,p}{\max}\biggl(\left(\sigma\circ\theta^{-i}\right)\vee \left(D\circ\theta^{-i}\right)\biggl)\Biggl\}\,\mbox{ a.s.}.
\end{multline*}
If we assume in particular that the service times are a.s. bounded, \emph{i.e.} that $\bar s$ defined by (\ref{eq:defS}) is finite, 
we have that
\begin{equation*}
c  \le \bar s\left((t+1)\wedge p\right)+1,
\end{equation*}
and if additionally, the patience times are a.s. bounded (\emph{i.e.} $\bar d$ - defined by (\ref{eq:defD}) - is finite), it follows that
\begin{equation*}
c  \le\bar s\left((t+1)\wedge p\right)\wedge \left\lfloor\bar s \vee \bar d\right\rfloor+1.
\end{equation*}
%
%

\noindent 
Now, remark that for all $j >0$, a.s. 
$$\tau^+\circ\theta^{-j}>j \Longleftrightarrow D\circ\theta^{-j}> \sum_{k=0}^{i-1} \xi\circ\theta^{k-j}  \Longleftrightarrow j \in A.$$
By the very definition of $\tau$,  
$$\sum_{j=0}^{\tau^+-2} \xi\circ\theta^j < D,$$
so taking expectations, and then using $\theta$-invariance we obtain  
\begin{align}
\esp{D} &> \esp{\sum_{j=1}^{\infty} \xi\circ\theta^{j-1}\car_{\tau^+>j}}\nonumber\\
&= \esp{\left(\xi\circ\theta^{-1}\right)\sum_{j=1}^{\infty} \car_{\tau^+\circ\theta^{-j}>j}}\nonumber\\
&= \esp{\left(\xi\circ\theta^{-1}\right)\mbox{Card }A}.\label{eq:cargo}
\end{align}
Again, if we assume that $\bar s$ is finite, it follows from 
(\ref{eq:defM}) that 
$$M \le \bar s\alpha\left(1+\mbox{Card }A\right)\,\mbox{ a.s.},$$
so with (\ref{eq:cardgeneral}),
$$c \le \bar s\left(1 + \mbox{Card }A\right)+1\,\mbox{ a.s.}.$$
Plugging this into (\ref{eq:cargo}), and using $\theta$-invariance thus yields  
\begin{equation}
\label{eq:etoile}
c \le \biggl\lceil\frac{\bar s\left(\esp{D}+\esp{\xi}\right)}{\esp{\xi}}\biggl\rceil.
\end{equation}
All these results are collected in the following proposition.
\begin{proposition}
\label{pro:majorec}
A stationary workload exists on the original probability space whenever $\pr{Z = 0}>0$, where $Z$ is defined by (\ref{eq:defZ}). 
If not, if both $\sigma$ and $\xi$ take value in $L_\alpha$ defined by (\ref{eq:defLalpha}), we have that 
\begin{multline}
\label{eq:majorefinal1}
c  \le 1+\frac{1}{\alpha}.\min\Biggl\{\underset{i=1,...,p}{\max}\left(\sigma\circ\theta^{-i}\right)+\sum_{j=1}^{t} \left(\sigma\circ\theta^{-j}\right)\\;\sum_{j=1}^{p} \left(\sigma\circ\theta^{-j}\right);\underset{i=1,...,p}{\max}\biggl(\left(\sigma\vee D\right)\circ\theta^{-i}\biggl)\Biggl\}
\,\mbox{ a.s.},
\end{multline}
where $p$ and $t$ are defined respectively by (\ref{eq:defp}) and (\ref{eq:deft-}). 

If additionnally, $\bar s$ defined by (\ref{eq:defS}) is finite,
\begin{equation*}
c \le \min\Biggl\{\bar s\left((t+1)\wedge p\right)+1\,;\,\biggl\lceil\frac{\bar s\left(\esp{D}+\esp{\xi}\right)}{\esp{\xi}}\biggl\rceil\Biggl\},
\end{equation*}
and if $\bar d$ defined by (\ref{eq:defD}) is as well finite,
\begin{equation*}
c \le \left\lfloor\bar s \vee \bar d\right\rfloor+1.
\end{equation*}  
\end{proposition}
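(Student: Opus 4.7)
The plan is to assemble the bounds established in the paragraphs preceding the statement, exploiting the fact (from Lemma~\ref{lemma:bijection}) that $c$ is a deterministic constant so any upper bound valid on an event of positive probability is automatically almost sure.

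For the first assertion, I would observe that the r.v.\ $Z$ of (\ref{eq:defZ}) is the minimal solution of the recursion $Z\circ\theta=\psi(Z)$ driven by the $\preceq$-nondecreasing continuous dominating map $\psi$; in particular, Hypothesis (H1) holds with $\psi$ and $Y:=Z$. When $\pr{Z=0}>0$, condition (\ref{eq:regener}) of Proposition~\ref{pro:appli}(iii) is met, and we directly obtain a unique $E$-valued solution on the original probability space.

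For the bounds on $c$ in the lattice case $\sigma,\xi\in L_\alpha$, the work has essentially been done: the inclusion $H\subseteq G$ with $G$ given by (\ref{eq:defGIMP}) together with the count in (\ref{eq:battery}) yields the first upper bound (\ref{eq:majorecard0}) on the event $\{\rho\le p\}$; the chain of inclusions (\ref{eq:includeH}) together with (\ref{eq:cardgeneral}) produces the bound (\ref{eq:majorecard1}) on $\{\rho\le p\}$ and (\ref{eq:majorecard2}) on $\{\tau^-\le t\}$. By the definitions (\ref{eq:defp}) and (\ref{eq:deft-}), each of these events has positive probability. Since $c$ is deterministic, each deterministic bound holding on a set of positive probability must hold almost surely; taking the minimum of the three bounds gives (\ref{eq:majorefinal1}).

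For the last two assertions, assume $\bar s<\infty$. Then $\sigma\le\bar s\alpha$ a.s., so the sums in (\ref{eq:majorefinal1}) are dominated by $\bar s\alpha(t+1)$ and $\bar s\alpha p$, giving $c\le\bar s((t+1)\wedge p)+1$. On the other hand, the bound (\ref{eq:etoile}) is obtained by combining $M\le\bar s\alpha(1+\mathrm{Card}\,A)$ in (\ref{eq:defM}) with (\ref{eq:cardgeneral}) and the identity $\esp{\mathrm{Card}\,A\cdot\xi\circ\theta^{-1}}\le\esp{D}$ from (\ref{eq:cargo}) (after dividing by $\esp{\xi}$); taking the minimum closes the first additional bound. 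Finally, if $\bar d<\infty$ as well, then $\sigma\vee D\le(\bar s\vee\bar d)\alpha$ a.s., so (\ref{eq:majorecard0}) immediately gives $c\le\lfloor\bar s\vee\bar d\rfloor+1$.

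There is no real obstacle: the argument is a bookkeeping exercise. The one conceptual point worth emphasizing is the repeated use of the determinism of $c$ to upgrade bounds valid only on events $\{\rho\le p\}$ or $\{\tau^-\le t\}$ into almost sure statements; all other steps are direct consequences of the inclusions already written out before the statement.
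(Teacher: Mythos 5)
Your proposal matches the paper's own argument: the proposition is stated there precisely as a collection of the bounds derived in the preceding paragraphs — the case $\pr{Z=0}>0$ via Proposition \ref{pro:appli}(iii) with the dominating map $\psi$, the bounds (\ref{eq:majorecard0}), (\ref{eq:majorecard1}), (\ref{eq:majorecard2}) upgraded to almost-sure statements by the determinism of $c$, and the $\bar s$, $\bar d$ and (\ref{eq:etoile}) refinements. Same approach, correctly assembled; nothing further to add.
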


Let us go through two examples to illustrate how our extension technique can be used to solve the stability problem of the queue with impatient 
customers. We work on the same dynamical system as in Examples 1 and 2.
\textit{ }\\\\
 
\noindent\textbf{Example 3}\\
\noindent
We first consider (a particular case of) Example 1, p. 303 in \cite{Moy09}. We set on $\left(\Omega,\maF,\mathbf P, \theta\right)$ the r.v.:
\[\left\{\begin{array}{ll}
\xi (\om_1) &=\xi(\om_2)=1;\\
\sigma(\om_1) &=0.5,\,\sigma (\om_2)=1.5;\\
D(\om_1) &=1.51,\,D(\om_2)=2.01.
\end{array}\right.\]
The workload is then valued in $0.5 \N$, and 
we check that for this model, according to the definitions (\ref{eq:defY}) and (\ref{eq:defZ}), 
\begin{align*} 
Z(\om_1)&=\max\biggl\{\sigma(\om_2)+D(\om_2)-\xi(\om_2)\,;\,\sigma(\om_1)+D(\om_1)-\left(\xi(\om_2)+\xi(\om_1)\right)\,;\,
...\biggl\}\\
&=2.51;\\
Z(\om_2)&=\max\biggl\{\sigma(\om_1)+D(\om_1)-\xi(\om_1)\,;\,\sigma(\om_2)+D(\om_2)-\left(\xi(\om_1)+\xi(\om_2)\right)\,;\,
...\biggl\}\\
&=1.51;\\
Y(\om_1) &=0.5;\\
Y(\om_2) &=0.
\end{align*}

We thus start from the set 
$G=[Y,Z] \cap 0.5 \N$, so 
$$G_{\om_1}=\left\{0.5,1,...,2.5\right\},\,G_{\om_2}=\left\{0,0.5,...,1.5\right\}.$$
It can be checked as well that 
\begin{align*}
A_{\om_1}&=\{1\}\,\mbox{ and }A_{\om_2}=\{1,2\};\\
B_{\om_1}&=\{1,2,3\}\,\mbox{ and }B_{\om_2}=\{1,2\},
\end{align*}
so $t=1$ and $p=2$. 
The least upper-bound of $c$ obtained according to Proposition \ref{pro:majorec} is given by 
$$\lfloor \bar s \vee \bar d\rfloor+1=\lfloor 1.5 \vee 2.01\rfloor+1=3.$$
To explicit construct the extension, we now form the random sequence $\{H^n\}$ for the sample $\om_1$:
\begin{align*}
H^1_{\om_1}&=\varphi_{\om_2}\left(G_{\om_2}\right)=\left\{0.5,1,1.5,2\right\};\\
H^2_{\om_1}&=\varphi_{\om_2}\circ\varphi_{\om_1}\left(G_{\om_1}\right)=\varphi_{\om_2}\left(\left\{0,0.5,1,1.5\right\}\right)=\left\{0.5,1,1.5,2\right\};\\
H^3_{\om_1}&=\varphi_{\om_2}\circ\varphi_{\om_1}\circ\varphi_{\om_2}\left(G_{\om_2}\right)=\varphi_{\om_2}\circ\varphi_{\om_1}\left(\left\{0.5,
1,1.5,2\right\}\right)=\left\{0.5,1,1.5\right\};\\
H^4_{\om_1}&=\varphi_{\om_2}\circ\varphi_{\om_1}\circ\varphi_{\om_2}\circ\varphi_{\om_1}\left(G_{\om_1}\right)=\varphi_{\om_2}\circ\varphi_{\om_1}
\left(\left\{0.5,1,1.5,2\right\}\right)=\left\{0.5,1,1.5\right\};\\
H^5_{\om_1}&=\varphi_{\om_2}\circ\varphi_{\om_1}\left(\left\{0.5,1,1.5\right\}\right)=\left\{0.5,1,1.5\right\};\\
H^6_{\om_1}&=\varphi_{\om_2}\circ\varphi_{\om_1}\left(\left\{0.5,1,1.5\right\}\right)=\left\{0.5,1,1.5\right\};\\
\vdots &
\end{align*}
Therefore, $H_{\om_1}=\left\{0.5,1,1.5\right\}$, and we obtain likewise that 
$H_{\om_2}=\left\{0,0.5,1\right\}$. So $c=3$ and there are at most three solutions on the original space. 

In fact, it can be checked that the following events of $\tilde{\maF}$: 
\begin{align*}
\mathcal I &=\left\{(\om_1,0.5)\,;\,(\om_2,0)\right\},\\
\mathcal I' &=\left\{(\om_1,1)\,;\,(\om_2,0.5)\right\},\\
\mathcal I'' &=\left\{(\om_1,1.5)\,;\,(\om_2,1)\right\}\\
\end{align*}
all belong to the set $\mathcal K$ of Lemma \ref{lemma:isomorphe}. As a consequence, the extension is not ergodic, and the three corresponding 
 r.v.'s $X$, $X'$ and $X''$ are the only three stationary workloads on the original space.
\bigskip

\noindent\textbf{Example 4}\\
\noindent Now, set on the same probability space: 
\[\left\{\begin{array}{ll}
\xi (\om_1) &=\xi(\om_2)=1;\\
\sigma(\om_1) &=3,\,\sigma (\om_2)=2;\\
D(\om_1) &=3.01,\,D(\om_2)=1.99.
\end{array}\right.\]
The workload sequence is then valued in $\N$. As above, we first check that 
\begin{align*}
Z(\om_1)=4.01&\mbox{ and } Z(\om_2)=5.01,\\
Y(\om_1)=1&\mbox{ and } Y(\om_2)=2,\\
A_{\om_1}=\{1,2\}&\mbox{ and } A_{\om_2}=\{1,3\},\\
B_{\om_1}=\{1,2,3,4,6\}&\mbox{ and } B_{\om_2}=\{1,2,3,5\},
\end{align*}
so that $t=2$ and $p=5$. So the smallest upper-bound of $c$ given by Proposition \ref{pro:majorec} is 
$$\lfloor\bar s \vee \bar d\rfloor+1=4.$$

Once again, we set $G=[Y,Z]\cap \N\,\mbox{ a.s.,}$ 
which amounts to
$$G_{\om_1}=\left\{1,2,3,4\right\}\mbox{ and }G_{\om_2}=\left\{2,3,4,5\right\}.$$
Then the computation yields  
$$H_{\om_1}=\left\{2,3,4\right\}\mbox{ and }H_{\om_2}=\left\{3,4,5\right\},$$
therefore $c=3$. It can be checked in that case, that the invariant sigma field of $\tilde{\maF}$ is 
$\left\{\emptyset,\tilde\Omega\right\}$, so the extension 
is ergodic, but there is no stationary workload on the original probability space. 

\subsection*{Independent case}
We now address the case where the service times, patience times and inter-arrivals times form three independent i.i.d. sequences: the system is then  denoted GI/GI/1/1+GI, and has been thoroughly studied \emph{e.g.} by Baccelli \emph{et al.} (\cite{MR86d:60103,HebBac81}). 

First, assume that 
\begin{equation}
\label{eq:condsuffGIGI}
\pr{\sigma < \xi} > 0,
\end{equation}
which implies, by independence, that $\pr{\sigma \le y-\varepsilon}>0$ and $\pr{\xi \ge y}>0$ for some $y, \varepsilon >0$. 

The r.v. $Z$ defined by (\ref{eq:defZ}) is finite, so there exists $n \in \N$ such that 
$\pr{Z < n\varepsilon}>0$. 
In particular, by $\theta$-invariance, 
$$\pr{Z\circ\theta^{-n} < n\varepsilon}>0.$$
Denote now for all $i \in \N^*$ and all $x\in\R+$ the events
\begin{align*}
\mathcal E_i^x &=\left\{\xi\circ \theta^{-j} \ge x;\,\forall j=1,...,i\right\};\\
\mathcal F_i^x &=\left\{\sigma\circ \theta^{-j}\le x;\,\forall j=1,...,i\right\},\\
\end{align*}
fix a sample on the event
$$\mathcal A_n:=\left\{Z\circ \theta^{-n} < n\varepsilon\right\} \cap \mathcal E_n^y \cap \mathcal F_n^{y-\varepsilon},$$
and assume that customer $-n$ finds upon arrival a workload $w$ such that 
$$w \in G_{\theta^{-n}\om}=L_{\alpha} \cap \left[Y\left(\theta^{-n}\om\right),\,Z\left(\theta^{-n}\om\right)\right].$$  
We are in the following alternative:
\begin{itemize}
\item[(i)] either for all $q=1,...,n-1$, the workload $\Phi^{n-q}_{\theta^{-q}\om}(w)$ upon the arrival of $C_{-q}$ is positive, so 
the server never idles before the end of service of customer $C_{-1}$. In that case, the workload $\Phi^n_\om(w)$ at 0 is less than 
the workload upon the arrival of customer $-n$ plus the work brought by the customers $C_{-n},C_{-(n-1)},...,C_{-1}$ minus the time elapsed, in other words
$$\Phi^n_{\om}(w) \le \left[w + \sum_{j=1}^n \sigma\circ\theta^{-j}(\om)-\sum_{j=1}^n \xi\circ\theta^{-j}(\om)\right]^+.$$
Hence, since $\om \in \mathcal A_n$, 
\begin{align*}
\Phi^n_{\om}(w)&\le \left[Z\left(\theta^{-n}\om\right) + n(y-\varepsilon) - ny\right]^+\\
& =0. 
\end{align*}
\item[(ii)] or for some $q\in \left\{1,...,n-1\right\}$ (take the largest one), $\Phi^{n-q}_{\theta^{-q}\om}(w)=0$, the system is empty at the arrival of $C_{-q}$. Hence, since $$\sigma\circ\theta^{-j}(\om) < \xi\circ\theta^{-j}(\om);\,j=1,...,q,$$ each following customer is then immediately 
attended upon arrival and leaves the system before the next arrival, so $\Phi^{n-j}_{\theta^{-j}\om}(w)=0$ for all $j \in \{0,...,q\}$, and in particular $\Phi^{n}_{\om}(w)=0$.
\end{itemize}
Therefore, in any case and for any $w \in G_{\theta^{-n}\om}$, $\Phi^{n}_{\om}(w)=0$. Thus, $H=\{0\}$, and in particular $c=1$,
 on $\mathcal A_n$. 
It is now easy to check that $\pr{\mathcal A_n}>0$ since the events $\left\{Z\circ \theta^{-n} < n\varepsilon\right\}$, $\mathcal E_n^y$ and $\mathcal F_n^{y-\varepsilon}$ are clearly independent and of positive probability. Thus, $c=1$ a.s., the extension is ergodic and there exists a unique solution $X$ on the original probability space, for which $\pr{X=0}>0$. We hence capture again  
the stability result of Baccelli \emph{et al.} (see \cite{MR86d:60103,HebBac81}). 

Assume now that (\ref{eq:condsuffGIGI}) does not hold.
Notice that $$\pr{G_n^{\underaccent{\bar}{s}\alpha}}=\left(\pr{\sigma \le \underaccent{\bar}{s}\alpha}\right)^n>0,$$
where $\underaccent{\bar}{s}$ is defined by (\ref{eq:defs}).  
As above, it is then easily checked that the events $\{\rho \le n\}$ and $G_n^{\underaccent{\bar}{s}\alpha}$ are independent, thus 
$$\pr{\left\{\rho \le n\right\} \cap G_n^{\underaccent{\bar}{s}\alpha}} > 0.$$
On the latter event, $M$ defined by (\ref{eq:defM}) is such that 
$M \le x\left(1+\mbox{Card }A\right)$. Therefore the argument leading to (\ref{eq:etoile})yields  
$$\mbox{Card }H \le \biggl\lceil\frac{\underaccent{\bar}{s}\left(\esp{D}+\esp{\xi}\right)}{\esp{\xi}}\biggl\rceil.$$

\providecommand{\bysame}{\leavevmode\hbox to3em{\hrulefill}\thinspace}
\providecommand{\MR}{\relax\ifhmode\unskip\space\fi MR }
\providecommand{\MRhref}[2]{%
  \href{http://www.ams.org/mathscinet-getitem?mr=#1}{#2}
}
\providecommand{\href}[2]{#2}

\end{document}